\documentclass[a4paper,11pt]{amsart}

\usepackage{amsmath, amsthm, amsfonts, amssymb}
\usepackage[bookmarks=false]{hyperref}
\usepackage{enumerate}

\usepackage{color}

\numberwithin{equation}{section}

\setcounter{tocdepth}{1}

\setlength{\evensidemargin}{0pt}
\setlength{\oddsidemargin}{0pt}
\setlength{\topmargin}{-10pt}
\setlength{\textheight}{670pt}
\setlength{\textwidth}{450pt}
\setlength{\headsep}{20pt}
\setlength{\footskip}{30pt}
\setlength{\parindent}{0pt}
\setlength{\parskip}{1ex plus 0.5ex minus 0.2ex}

\newtheorem{letterthm}{Theorem}

\newtheorem{lettercor}[letterthm]{Corollary}

\newtheorem{thm}{Theorem}[section]
\newtheorem{lem}[thm]{Lemma}

\newtheorem{prop}[thm]{Proposition}

\theoremstyle{definition}
\newtheorem{rem}[thm]{Remark}

\newtheorem{exam}{Example}

\newtheorem{df}[thm]{Definition}

\newcommand{\R}{\mathbf{R}}
\newcommand{\C}{\mathbf{C}}
\newcommand{\Z}{\mathbf{Z}}

\newcommand{\N}{\mathbf{N}}

\newcommand{\Ad}{\operatorname{Ad}}
\newcommand{\id}{\text{\rm id}}

\newcommand{\Inn}{\operatorname{Inn}}
\newcommand{\Aut}{\operatorname{Aut}}
\newcommand{\Out}{\operatorname{Out}}
\newcommand{\rL}{\mathord{\text{\rm L}}}
\newcommand{\rC}{\mathord{\text{\rm C}}}

\newcommand{\rE}{\mathord{\text{\rm E}}}

\newcommand{\Ball}{\mathord{\text{\rm Ball}}}

\newcommand{\supp}{\mathord{\text{\rm supp}}}

\newcommand{\II}{{\rm II}}
\newcommand{\III}{{\rm III}}

\begin{document}

\title[Fullness of crossed products of factors by discrete groups]{Fullness of crossed products of factors\\ by discrete groups}

\begin{abstract}
Let $M$ be an arbitrary factor and $\sigma : \Gamma \curvearrowright M$ an action of a discrete group. In this paper, we study the fullness of the crossed product $M \rtimes_\sigma \Gamma$. When $\Gamma$ is amenable, we obtain a complete characterization: the crossed product factor $M \rtimes_\sigma \Gamma$ is full if and only if $M$ is full and the quotient map $\overline{\sigma} : \Gamma \rightarrow \Out(M)$ has finite kernel and discrete image. This answers a question of Jones from \cite{Jo81}. When $M$ is full and $\Gamma$ is arbitrary, we give a sufficient condition for $M \rtimes_\sigma \Gamma$ to be full which generalizes both Jones' criterion and Choda's criterion. In particular, we show that if $M$ is any full factor (possibly of type $\III$) and $\Gamma$ is a non-inner amenable group, then the crossed product $M \rtimes_\sigma \Gamma$ is full. 
\end{abstract}

\address{Laboratoire de Math\'ematiques d'Orsay\\ Universit\'e Paris-Sud\\ CNRS\\ Universit\'e Paris-Saclay\\ 91405 Orsay\\ FRANCE}

\author{Amine Marrakchi}
\email{amine.marrakchi@math.u-psud.fr}

\thanks{AM is supported by ERC Starting Grant GAN 637601}

\subjclass[2010]{46L10, 46L36, 46L40, 46L55}

\keywords{Full factors; Crossed product; Spectral gap; Amenable group; Type ${\rm III}$ factors}

\maketitle


\section{Introduction and statement of the main results}

\subsection*{Introduction}
Let $M$ be any factor and let $\sigma : \Gamma \curvearrowright M$ be an action of a discrete group $\Gamma$. This paper is motivated by the following question: when is the crossed product factor $M \rtimes_\sigma \Gamma$ a \emph{full} factor? Recall that a type ${\rm II_1}$ factor $M$ is \emph{full}, or equivalently does not have property Gamma of Murray and von Neumann \cite{MvN43}, if every uniformly bounded net $(x_i)_{i \in I}$ that is \emph{central}, meaning that $\lim_i \|x_i a - a x_i\|_2 = 0$ for every $a \in M$, must be {\em trivial}, meaning that $\lim_i \|x_i - \tau(x_i)1 \|_2 =0$. More generally, following \cite{Co74}, we say that an arbitrary factor $M$ is {\em full} if every uniformly bounded net $(x_i)_{i \in I}$ in $M$ that is {\em centralizing}, meaning that $\lim_i \|x_i \varphi - \varphi x_i\| = 0$ for all $\varphi \in M_\ast$, must be {\em trivial}, meaning that there exists a bounded net $(\lambda_i)_{i \in I}$ in $\C$ such that $x_i - \lambda_i 1 \to 0$ strongly as $i \to \infty$. 

By \cite[Theorem 3.1]{Co74}, for any full factor $M$, the subgroup of inner automorphisms $\Inn(N)$ is closed in the group of automorphisms $\Aut(M)$ and hence the quotient group $\Out(M) = \Aut(M) / \Inn(M)$ inherits a structure of a complete topological group. In \cite{Jo81}, Jones proved that if $M$ is a full $\II_1$ factor and $\sigma : \Gamma \curvearrowright M$ is an action such that the quotient map $\overline{\sigma} : \Gamma \rightarrow \Out(M)$ is injective and has discrete image, then the crossed product $M \rtimes_\sigma \Gamma$ is also a full factor. This result was recently generalized to arbitrary factors in \cite{Ma16}. Moreover, Jones proved that this sufficient condition is in fact necessary when $\Gamma = \Z$. His proof only works for abelian groups but Jones suggested that this statement should be true for every amenable group. Our first main theorem answers this question affirmatively and gives a complete characterization of the fullness of $M \rtimes_\sigma \Gamma$ when $\Gamma$ is amenable.

\begin{letterthm} \label{main amenable}
Let $M$ be a factor and $\sigma: \Gamma \curvearrowright M$ an action of a discrete group $\Gamma$ such that $M \rtimes_\sigma \Gamma$ is a factor. Suppose that $\Gamma$ is amenable. Then the following are equivalent:
\begin{itemize}
\item [$(\rm i)$] $M \rtimes_\sigma \Gamma$ is full.
\item [$(\rm ii)$] $M$ is full and the map $\overline{\sigma} : \Gamma \rightarrow \Out(M)$ has finite kernel and discrete image.
\end{itemize}
\end{letterthm}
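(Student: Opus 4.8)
The plan is to reformulate fullness through relative commutants in an ultrapower and to reduce the whole statement to one assertion about the inner action of $\Gamma$ on such a relative commutant. Write $\tilde M = M \rtimes_\sigma \Gamma$ and fix a free ultrafilter $\omega$; when $M$ has separable predual I may work with a fixed ultrafilter on $\N$ and the Ocneanu ultrapower $\tilde M^\omega$ relative to a fixed faithful normal state (so that the type $\III$ case is covered), the general case following by a standard separability/net argument. Recall that $\tilde M$ is full iff $\tilde M' \cap \tilde M^\omega = \C 1$. The point is that $x \in \tilde M^\omega$ is central iff it commutes with $M$ and is fixed by every $u_g$; since $\Ad(u_g)$ preserves $Q := M' \cap \tilde M^\omega$, the group $\Gamma$ acts on $Q$ by automorphisms that are inner in $\tilde M^\omega$, and
\[
\tilde M' \cap \tilde M^\omega = Q^{\Gamma}.
\]
Expanding an element of $Q$ in Fourier series $(\sum_g x_n^g u_g)_\omega$, the coefficients $x_n^g$ are asymptotic intertwiners between $\id$ and $\sigma_g$, so $Q$ records precisely the approximately inner directions of the action. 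A short analysis then shows that condition $(\mathrm{ii})$ is equivalent to $\dim_{\C} Q < \infty$: if $M$ is not full then $M' \cap M^\omega \subseteq Q$ is infinite dimensional; if $\ker \overline\sigma$ is infinite the unitaries $\nu_g = w_g u_g$ (with $\Ad(w_g)\sigma_g = \id$) span an infinite dimensional $\rL_\mu(\ker\overline\sigma) \subseteq Q$; and if $\overline\sigma(\Gamma)$ is not discrete, escaping sequences supply infinitely many nonscalar unitaries $(w_n u_{g_n})_\omega \in Q$. Conversely, fullness of $M$ together with $\Inn(M)$ being closed \cite[Theorem 3.1]{Co74} forces every nonzero asymptotic intertwiner to come from $\ker\overline\sigma$, so under $(\mathrm{ii})$ one gets $Q \cong \rL_\mu(\ker\overline\sigma)$, which is finite dimensional.

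For the implication $(\mathrm{ii}) \Rightarrow (\mathrm{i})$ I would not argue through $Q^\Gamma$ directly (that risks circularity) but reduce to the Jones--Marrakchi criterion. Put $N = \ker\overline\sigma$, a finite normal subgroup acting by inner automorphisms; untwisting the implementing unitaries gives $M \rtimes_\sigma N \cong M \ovt \rL_\mu(N)$, a finite direct sum of full factors since fullness is stable under tensoring by a finite type $\mathrm{I}$ factor. Writing $\tilde M \cong (M \rtimes_\sigma N) \rtimes (\Gamma/N)$ as a (possibly cocycle) crossed product, the induced homomorphism $\Gamma/N \to \Out(M \rtimes_\sigma N)$ is injective with discrete image, and \cite{Jo81, Ma16} yields fullness of $\tilde M$. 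The one routine point to check here is that injectivity and discreteness survive the passage from $\Out(M)$ to $\Out(M \rtimes_\sigma N)$.

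For $(\mathrm{i}) \Rightarrow (\mathrm{ii})$ I argue by contraposition, so by the first paragraph I may assume $\dim_{\C} Q = \infty$ and must produce a nonscalar central element, i.e. prove the following.
\begin{newclaim}
If $\Gamma$ is amenable and $\dim_{\C} Q = \infty$, then $Q^{\Gamma} \neq \C 1$.
\end{newclaim}

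This claim is the heart of the matter and the step I expect to be the main obstacle. The naive approach --- averaging a single nonscalar $x \in Q$ over a F\o lner set, $\frac{1}{|F_k|}\sum_{g \in F_k} u_g x u_g^*$ --- yields an asymptotically $\Gamma$-invariant element of $Q$, but this can converge to a scalar when the conjugation action is mixing, so element-by-element averaging does not suffice. The correct mechanism uses amenability globally: the inclusion $M \subseteq \tilde M$ is co-amenable exactly because $\Gamma$ is amenable, and I would exploit the resulting invariant mean / hypertrace on $\tilde M$ together with a diagonal reindexing in a further ultrapower to upgrade an almost-invariant family in the infinite dimensional algebra $Q$ into a genuine nonscalar element of $Q^\Gamma$ (a Day--Namioka fixed-point argument for the affine action of $\Gamma$ on a suitable state space). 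It is precisely here that amenability, rather than mere inner amenability, is indispensable: for non-amenable $\Gamma$ the action on $Q$ can be strongly ergodic and the claim fails, which is the source of the opposite behaviour in the non-inner-amenable case mentioned in the abstract. I would isolate this claim as a general lemma on inner actions of amenable groups on ultrapowers; its proof is the only genuinely new ingredient beyond \cite{Jo81, Ma16} and the standard structure theory of full factors.
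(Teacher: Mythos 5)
Your reduction is essentially the one the paper uses: the algebra you call $Q$ is exactly the paper's $P = M' \cap (M\rtimes_\sigma\Gamma)^\omega$, the identification $\tilde M' \cap \tilde M^\omega = Q^\Gamma$ is correct, the orthonormal families $(w_nu_{g_n})^\omega$ witnessing $\dim Q = \infty$ when (ii) fails are precisely the paper's construction, and (ii) $\Rightarrow$ (i) is indeed dispatched by the Jones--Marrakchi criterion (the paper gets it as the case $\partial_\sigma\Gamma = \emptyset$ of its Theorem B). So you have correctly located the entire difficulty in your Claim. But the Claim is not proved, and the sketch you offer for it does not work. A Day--Namioka fixed-point argument on a state space of $Q$ is vacuous here: the affine $\Gamma$-action on the normal states of $Q$ \emph{always} has a fixed point, with or without amenability, since the restriction of $\varphi\circ\rE_M$ to $Q$ is already $\Gamma$-invariant (this is exactly the invariant state the paper uses to place itself in the state-preserving setting). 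What you need is a non-scalar fixed \emph{element} of the algebra, and no averaging of states or hypertrace for $M\subseteq\tilde M$ produces one. You also correctly observe that F\o lner-averaging a single $x\in Q$ can collapse to a scalar; that observation is the problem, not a step toward its solution.

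What actually fills this hole in the paper is its Theorem 3.3: a state-preserving strongly ergodic action of an amenable group forces the algebra to be finite dimensional. (Note the paper only needs to refute \emph{strong} ergodicity of $\Gamma\curvearrowright Q$, which fullness of $\tilde M$ does give by a diagonal argument; refuting plain ergodicity, as your Claim literally demands, is stronger than necessary.) The proof of that theorem is a genuine chain of arguments you would have to supply: a spectral gap characterization of central ergodicity via \cite{Ma17}; the fact that a centrally ergodic action of an amenable group on a finite von Neumann algebra forces a direct sum of full factors, proved by F\o lner-averaging the \emph{projections} $p_i$ witnessing non-fullness (averaging projections rather than a fixed central element is what circumvents the mixing obstruction you noticed); a reduction splitting off the ``inner part'' $H = \{g : \sigma_g \text{ inner}\}$ and the amenable subalgebra generated by the implementing unitaries, so as to arrive at an \emph{outer} strongly ergodic action on $P'\cap M$; and finally the recent result of Popa--Shlyakhtenko--Vaes for outer (free) actions. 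None of these ingredients, nor substitutes for them, appear in your proposal, so the implication (i) $\Rightarrow$ (ii) remains unproven as written.
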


\begin{exam}
Let $G$ be a discrete amenable group and let $\pi : G \rightarrow \mathcal{O}(H_\R)$ be any orthogonal representation. Then the crossed product associated to the \emph{free Bogoljubov action} $\Gamma(H_\R)'' \rtimes_\pi G$ is a full factor if and only if $\dim H_\R \neq 1$, $\pi$ is faithful and $\pi(G)$ is discrete in $\mathcal{O}(H_\R)$. Indeed, by \cite[Theorem A]{H14}, this condition is sufficient and by Theorem \ref{main amenable}, it is necessary. This was previsously only known when $G$ is abelian \cite[Corollary 6.2]{H14}.
\end{exam}

The implication $(\rm ii) \Rightarrow (\rm i)$ of Theorem \ref{main amenable} is always true even without the amenability assumption, so our new input is $(\rm i) \Rightarrow (\rm ii)$. This implication actually follows from a much more general property: no action of a discrete amenable group on a diffuse von Neumann algebra can be strongly ergodic. This property was proved very recently by Popa, Shlyakhtenko and Vaes \cite{PSV18} in the case where the action is \emph{free} and trace preserving. In our Theorem \ref{not strongly ergodic}, we remove the freeness (and the traciality) assumption and the proof of $(\rm i) \Rightarrow (\rm ii)$ follows quite easily.

Note that the implication $(\rm i) \Rightarrow (\rm ii)$ fails when $\Gamma$ is not amenable. First, $M$ need not be full. For example, a Bernoulli crossed product $(A,\varphi)^{\otimes \Gamma} \rtimes \Gamma$ is always full when $\Gamma$ is non-amenable \cite[Lemma 2.7]{VV14}. Moreover, even if we assume that $M$ is full, the map $\overline{\sigma} : \Gamma \rightarrow \Out(M)$ need not have discrete image. In fact, a well-known theorem of Choda \cite{Ch81} shows that if $M$ is a full $\II_1$ factor and $\Gamma$ is a non-inner amenable group (e.g.\ $\Gamma=\mathbb{F}_2$), then for \emph{any} action $\sigma : \Gamma \curvearrowright M$, the crossed product $M \rtimes_\sigma \Gamma$ is a full factor. Recall that $\Gamma$ is inner amenable if it admits a non-trivial conjugacy invariant mean, or equivalently if the action by conjugacy $\Gamma \curvearrowright \beta \Gamma \setminus \{ 1 \}$ admits an invariant probability measure.

In our second main theorem, we give a sufficient condition for the fullness of $M \rtimes_\sigma \Gamma$ which generalizes both criterions of Jones and Choda. In order to formulate it, we associate, to any action $\sigma : \Gamma \curvearrowright M$, the following compact $\Gamma$-space (where $\Gamma$ acts by conjugacy):
$$\partial_\sigma \Gamma=\{ x \in \beta \Gamma \setminus \Gamma \mid \lim_{g \to x} \overline{\sigma}(g)=1 \}.$$
\begin{letterthm} \label{main fullness}
Let $M$ be a full factor and let $\sigma: \Gamma \curvearrowright M$ be an action of a discrete group $\Gamma$ such that $M \rtimes_\sigma \Gamma$ is a factor. Suppose that the action $\Gamma \curvearrowright \partial_\sigma \Gamma$ has no invariant probability measure. Then $M \rtimes_\sigma \Gamma$ is full. 
\end{letterthm}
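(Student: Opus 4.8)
The plan is to prove the contrapositive at the level of centralizing nets: I will show that if $N := M \rtimes_\sigma \Gamma$ is \emph{not} full, then the conjugation action $\Gamma \curvearrowright \partial_\sigma \Gamma$ admits an invariant probability measure. So I start with a uniformly bounded centralizing net $(x_n)$ in $N$ that is nontrivial, fix a faithful normal state $\varphi = \varphi_M \circ E_M$ where $E_M : N \to M$ is the canonical conditional expectation and $\varphi_M$ is faithful normal on $M$, and use the Fourier decomposition $x_n = \sum_g (x_n)_g u_g$ with $(x_n)_g = E_M(x_n u_g^\ast) \in M$. The components $(x_n)_g u_g$ are pairwise orthogonal in $L^2(N,\varphi)$, which lets me speak of the mass carried by each group element.

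First I would dispose of the diagonal. Writing $a_n = E_M(x_n)$, the $M$-bimodularity of $E_M$ shows that $(a_n)$ is a centralizing net in $M$; since $M$ is full there are scalars $\lambda_n$ with $a_n - \lambda_n 1 \to 0$ strongly. Thus the whole problem reduces to the off-diagonal part $y_n = x_n - a_n$, and after normalizing I may assume that $\|y_n\|_\varphi$ is bounded away from $0$ (otherwise $(x_n)$ is already trivial).

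The heart of the argument is to extract an invariant measure from the mass distribution $\mu_n$ of $y_n$, where $\mu_n(g) \propto \|(x_n)_g u_g\|_\varphi^2$ for $g \neq e$, normalized so that $\sum_{g \neq e} \mu_n(g) = 1$. I would take a weak-$\ast$ cluster point $\mu$ of $(\mu_n)$ as a mean on $\ell^\infty(\Gamma)$, i.e.\ a probability measure on $\beta\Gamma$, and establish three properties. \textbf{(1) No atoms in $\Gamma$:} the centralizing condition forces any part of the mass concentrated on a fixed finite conjugation-invariant subset of $\Gamma \setminus \{e\}$ to behave like an element of $M' \cap N$ fixed by all $\Ad(u_h)$, hence to lie in $Z(N) = \C$; being off-diagonal and $N$ being a factor, this part vanishes, so $\mu_n(g) \to 0$ for each fixed $g$ and $\mu$ is supported on the corona $\beta\Gamma \setminus \Gamma$. \textbf{(2) Concentration where $\overline{\sigma} \to 1$:} using that $M$ is full I would prove a twisted spectral-gap estimate, namely that there are $v_1, \dots, v_k \in \cU(M)$ and $\kappa > 0$ such that if $\overline{\sigma}(g)$ stays bounded away from $1$ in the (complete, metrizable) topology of $\Out(M)$, then $\|a\|_\varphi \le \kappa^{-1} \sum_i \|v_i a - a \sigma_g(v_i)\|_\varphi$ for all $a \in M$. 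Applied to $a = (x_n)_g$, together with the fact that $(x_n)$ asymptotically commutes with the $v_i$, this drives the mass off every set on which $\overline{\sigma}$ stays away from $1$, so $\mu$ is supported on $\partial_\sigma\Gamma$. \textbf{(3) Conjugation invariance:} the centralizing condition applied to the unitaries $u_h$ gives $\Ad(u_h)(y_n) \approx y_n$, and since $\Ad(u_h)$ permutes the Fourier components by $g \mapsto hgh^{-1}$, the $\mu_n$ are asymptotically conjugation-invariant, so $\mu$ is invariant. Combining (1)--(3), $\mu$ is a $\Gamma$-invariant probability measure on $\partial_\sigma\Gamma$, contradicting the hypothesis; hence $N$ is full.

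I expect the main obstacle to lie entirely in making (2) and (3) rigorous in the absence of a trace. The $L^2$-mass $\|(x_n)_g u_g\|_\varphi^2$ is computed against the twisted state $\varphi_M \circ \sigma_{g^{-1}}$, and conjugation by $u_h$ distorts these weights, so the naive conjugation-invariance of $\mu_n$ fails in type $\III$. To handle this I would measure mass in the standard form $L^2(N)$ via the canonical unitary implementation of $\Ad(u_h)$, which is norm-preserving and preserves the natural cone, and reformulate ``centralizing'' through the state-independent asymptotic commutation with the full Hilbert space; this removes the modular twist from the invariance computation. The twisted spectral-gap estimate of (2) must in turn be extracted from the completeness and metrizability of $\Out(M)$ guaranteed by fullness of $M$ (\cite[Theorem 3.1]{Co74}), which is the genuinely nontrivial analytic input underlying the whole argument.
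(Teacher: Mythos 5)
Your overall architecture is the same as the paper's: Fourier decomposition, a mass distribution on $\Gamma$ whose weak$^*$ cluster point is a conjugation-invariant mean, and the three properties (no atoms in $\Gamma$, support where $\overline{\sigma}\to 1$, conjugation invariance) are exactly the three steps of the paper's proof, with your ``twisted spectral-gap estimate'' playing the role of the paper's Lemma \ref{neighborhood}. You have also correctly located the crux in the non-tracial case. But your proposed resolution of that crux is where the genuine gap lies. Passing to the standard form and the canonical implementation $u_hJu_hJ$ of $\Ad(u_h)$ does not remove the modular twist: this unitary maps the $g$-component of $\rL^2(M\rtimes_\sigma\Gamma)$ onto the $hgh^{-1}$-component, but it sends $x\xi_\varphi$ to $\Ad(u_h)(x)\,\xi_{\varphi\circ\Ad(u_h^*)}$, so the componentwise masses it preserves are those computed against the \emph{different} state $\varphi\circ\Ad(u_h^*)=(\varphi_M\circ\sigma_{h}^{-1})\circ E$ (the lift $\varphi=\varphi_M\circ E$ is not $\Ad(u_h)$-invariant unless $\varphi_M$ is $\Gamma$-invariant). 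The state-independence of the centralizing condition does not help, because the quantity that must be asymptotically conjugation-invariant is not the centralizing defect but the componentwise mass distribution itself, and for a general coefficient $(x_n)_g\in M$ the quantities $\|(x_n)_gu_g\|_\varphi$ and $\|(x_n)_gu_g\|_{\varphi\circ\Ad(u_h^*)}$ are incomparable. The twist is relocated, not eliminated.

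The missing idea is the unitarization of the Fourier coefficients, which is the actual technical heart of the paper's proof. Writing $x_i^g=v_i^g|x_i^g|$ and $\lambda_i^g=\|x_i^g\|_{\sigma_g(\varphi)}$, one uses the spectral gap inequality for the full factor $M$ (Theorem \ref{gap full}, i.e.\ \cite[Theorem 4.4]{Ma16}) applied to $|x_i^g|$ and $|(x_i^g)^*|$ to show that $x_i$ may be replaced, up to a perturbation vanishing in $\|\cdot\|_\varphi^\sharp$, by $y_i=\sum_g\lambda_i^gv_i^gu_g$. Since $\|\lambda v u_g\|_\psi=\lambda$ for every unitary $v$ and every state $\psi$, the mass distribution $g\mapsto(\lambda_i^g)^2$ is then genuinely state-independent, the vector $\eta_i=\sum_g\lambda_i^g\delta_g\in\ell^2(\Gamma)$ is asymptotically conjugation-invariant, and your steps (1)--(3) go through. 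This preprocessing also repairs your step (2): the uniform lower bound $\sum_{\xi\in F}\|u\alpha(\xi)-\xi u\|^2\geq c$ for $\alpha\notin\pi_M^{-1}(\mathcal{V})$ is a statement about \emph{unitaries} $u$ (its proof produces automorphisms $\Ad(u_i)\circ\alpha_i\to\id_M$ and invokes fullness), and your version for arbitrary $a\in M$ normalized in $\|\cdot\|_\varphi$ is not justified; you need unitary coefficients before the lemma applies. A smaller divergence: the paper does not start from a nontrivial centralizing net but from the negation of a spectral gap inequality for $M\rtimes_\sigma\Gamma$, which produces a net asymptotically commuting only with finitely many vectors of $\Sigma_\varphi$ and with the $u_g$; this is the form in which the estimates are usable and yields the formally stronger spectral gap conclusion.
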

Note that Theorem \ref{main fullness} applies in particular when $\partial_\sigma \Gamma =\emptyset$ which means precisely that the map $\overline{\sigma} : \Gamma \rightarrow \Out(M)$ has finite kernel and discrete image. So this case was essentially already known by \cite{Jo81} and \cite{Ma16}.

Another extreme case where Theorem \ref{main fullness} applies is when $\Gamma$ is non-inner amenable since $\beta \Gamma$ does not carry any non-trivial invariant probability measure in that case. Therefore, we obtain the following generalization of Choda's theorem \cite{Ch81} to factors of arbitrary type.

\begin{lettercor} \label{main cor}
Let $M$ be any full factor and let $\sigma : \Gamma \curvearrowright M$ be an action of a discrete non-inner amenable group. Then $M \rtimes_\sigma \Gamma$ is a full factor.
\end{lettercor}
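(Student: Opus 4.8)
The plan is to deduce Corollary~\ref{main cor} directly from Theorem~\ref{main fullness}: the only thing to do is to check that, when $\Gamma$ is non-inner amenable, the conjugation action $\Gamma \actson \partial_\sigma \Gamma$ admits no invariant probability measure. Recall that $\Gamma$ being non-inner amenable means precisely that the conjugation action $\Gamma \actson \beta\Gamma \setminus \{1\}$ carries no invariant probability measure. Since $\partial_\sigma\Gamma \subseteq \beta\Gamma\setminus\Gamma \subseteq \beta\Gamma\setminus\{1\}$, it will suffice to verify that $\partial_\sigma\Gamma$ is a closed $\Gamma$-invariant subset of $\beta\Gamma\setminus\{1\}$: any $\Gamma$-invariant probability measure on such a subset extends by zero to a $\Gamma$-invariant probability measure on $\beta\Gamma\setminus\{1\}$, which is excluded by hypothesis.

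First I would check conjugation-invariance of $\partial_\sigma\Gamma$. For $h \in \Gamma$ and $x \in \partial_\sigma\Gamma$, the point $hxh^{-1}$ is the limit of $hgh^{-1}$ as $g \to x$, and since $M$ is full, $\Out(M)$ is a topological group, so the group operations are continuous and
\[
\lim_{g \to x} \overline{\sigma}(hgh^{-1}) = \overline{\sigma}(h)\Bigl(\lim_{g\to x}\overline{\sigma}(g)\Bigr)\overline{\sigma}(h)^{-1} = \overline{\sigma}(h)\,1\,\overline{\sigma}(h)^{-1} = 1,
\]
whence $hxh^{-1} \in \partial_\sigma\Gamma$. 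For closedness, I would use that $\Gamma$ is open in $\beta\Gamma$, so $\beta\Gamma\setminus\Gamma$ is closed, together with the identity $\{x \in \beta\Gamma : \lim_{g\to x}\overline{\sigma}(g) = 1\} = \bigcap_V \overline{\{g \in \Gamma : \overline{\sigma}(g) \in V\}}$, where $V$ ranges over the neighborhoods of $1$ in $\Out(M)$ and the closures are taken in $\beta\Gamma$. Being an intersection of (clopen) sets, this set is closed, and hence so is its intersection with $\beta\Gamma\setminus\Gamma$, which is exactly $\partial_\sigma\Gamma$.

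With these two points in hand, the hypothesis on the action in Theorem~\ref{main fullness} is satisfied, and the theorem yields that $M \rtimes_\sigma \Gamma$ is full, which is the assertion of Corollary~\ref{main cor}. The one further item is the standing requirement of Theorem~\ref{main fullness} that $M \rtimes_\sigma \Gamma$ be a factor; I would either carry this as a hypothesis or verify it directly. It is worth noting in passing that the fullness conclusion is formally stronger than factoriality: a non-scalar central element $z$ would give the non-trivial constant centralizing net $x_i = z$, so triviality of all bounded centralizing nets already forces the center to be trivial.

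The proof is short, and the genuine content lies entirely in the reduction above; I therefore expect no conceptual obstacle, only bookkeeping. The point requiring care is that $\partial_\sigma\Gamma$ is a bona fide compact $\Gamma$-space — that is, both closed and conjugation-invariant — and that the passage from ``no invariant measure on $\beta\Gamma\setminus\{1\}$'' to ``no invariant measure on the invariant subset $\partial_\sigma\Gamma$'' is correctly justified by extending a measure on a closed invariant subset by zero. Everything else is an immediate invocation of Theorem~\ref{main fullness}.
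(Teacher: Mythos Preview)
Your proposal is correct and follows exactly the route indicated in the paper: the corollary is deduced from Theorem~\ref{main fullness} by observing that non-inner amenability of $\Gamma$ rules out any conjugation-invariant probability measure on $\beta\Gamma\setminus\{1\}$, hence a fortiori on the closed invariant subset $\partial_\sigma\Gamma$. Your explicit verification that $\partial_\sigma\Gamma$ is closed and conjugation-invariant, and your remark on the factoriality hypothesis of Theorem~\ref{main fullness}, simply make precise what the paper leaves implicit.
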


We would like to emphasize the fact that Choda's argument of \cite{Ch81} fails in general for non-state preserving actions so that different methods are required to obtain similar results in that case, see for instance \cite{HI15}, \cite{Oz16} or \cite[Corollary F]{HI18}. Note that non-state preserving actions $\sigma : \Gamma \curvearrowright M$ on full type $\III$ factors are abundant. For example, this happens if $\Gamma$ contains a subgroup which acts ergodically on $M$ and preserves a state $\varphi$ which is not preserved by $\Gamma$. Therefore Corollary \ref{main cor} cannot be obtained by just adapting Choda's proof and we use in a crucial way the spectral gap characterization of fullness obtained in \cite{Ma16}.

\tableofcontents

\section{Preliminaries} 
\subsection*{Basic notations}
Let $M$ be any von Neumann algebra. We denote by $M_\ast$ its predual, by $\mathcal U(M)$ its group of unitaries and by $\mathcal Z(M)$ its center. The uniform norm on $M$ is denoted by $\| \cdot \|_\infty$ and the unit ball of $M$ with respect to the uniform norm $\|\cdot\|_\infty$ is denoted by $\Ball(M)$. If $\varphi \in M_*^+$ is a positive functional, we put $\| x \|_\varphi=\varphi(x^*x)^{1/2}$ and $\|x\|_\varphi^\sharp=\left( \varphi(x^*x)+\varphi(xx^*) \right)^{1/2}$ for all $x \in M$. 

\subsection*{Standard form}  Let $M$ be any von Neumann algebra. We denote by $(M, \rL^2(M), J, \rL^2(M)_+)$ the standard form of $M$ \cite{Ha73}. Recall that $\rL^2(M)$ is naturally endowed with the structure of a $M$-$M$-bimodule: we will simply write $x \xi y = x Jy^*J \xi$ for all $x, y \in M$ and all $\xi \in \rL^2(M)$. The vector $J \xi$ will be also simply denoted by $\xi^{*}$ so that $(x\xi)^{*}=\xi^{*}x^{*}$. For every $\xi \in \rL^2(M)$ we denote by $|\xi| \in \rL^2(M)_+$ its positive part. If $\xi \in \rL^2(M)_+$ we denote by $\xi^2 \in M_*^+$ the positive functional on $M$ defined by $x \mapsto \langle x \xi, \xi \rangle$.

For every positive functional $\varphi \in M_\ast$, we let
$$\Sigma_\varphi=\left\{\eta \in \rL^2(M) : \exists \lambda \in \R_+, \;  |\eta|^2 \leq \lambda\varphi  \text{ and }  |\eta^{*}|^2 \leq \lambda\varphi  \right\}=M \varphi^{1/2} \cap \varphi^{1/2}M. $$
Note that if for some (not necessarily bounded) net $(x_i)_{i \in I}$ in $M$ we have $\|x_i \|_\varphi \to 0$, then $\| x_i\xi \| \to 0$ for all $\xi \in \Sigma_\varphi$. See the discussion after \cite[Lemma 3.2]{Ma16} for further details.

\subsection*{Topological groups associated to a von Neumann algebra}
A \emph{topological group} is a group $G$ equipped with a topology making the map 
$ (g,h) \in G \times G \mapsto gh^{-1}$ continuous. If $H$ is a normal subgroup of $G$, then  $G/H$ is a topological group with respect to the quotient topology (not necessarily Hausdorff). A topological group $G$ is said to be \emph{complete} if it is Hausdorff and complete with respect to the uniform structure generated by the following sets
$$ U_\mathcal{V}=\{ (g,h) \in G \times G \mid gh^{-1} \in \mathcal{V} \text{ and } g^{-1}h \in \mathcal{V} \}, $$
where $\mathcal{V}$ runs over the neighborhoods of $1$ in $G$. If $G$ is complete and $H$ is a subgroup of $G$, then $H$ is complete if and only if it is closed in $G$. If moreover $H$ is normal, then $G/H$ is also complete. We say that a complete topological group is Polish if it is separable and completely metrizable.

Let $M$ be a von Neumann algebra. Then the restriction of the weak$^*$ topology, the strong topology and the $*$-strong topology all co\"incide on $\mathcal{U}(M)$ and they turn $\mathcal{U}(M)$ into a complete topological group. If moreover $M_*$ has separable predual, then $\mathcal{U}(M)$ is Polish.

The group $\Aut(M)$ of all $\ast$-automorphisms of $M$ acts on $M_*$ by $\theta(\varphi)=\varphi \circ \theta^{-1}$ for all $\theta \in \Aut(M)$ and all $\varphi \in M_*$. Following \cite{Co74, Ha73}, the $u$-topology on $\Aut(M)$ is the topology of pointwise norm convergence on $M_*$, meaning that a net $(\theta_i)_{i \in I}$ in $\Aut(M)$ converges to the identity $\id_M$ in the $u$-topology if and only if for all $\varphi \in M_*$ we have $\| \theta_i(\varphi) -\varphi \| \to 0$ as $i \to \infty$. This turns $\Aut(M)$ into a complete topological group. When $M_*$ is separable, $\Aut(M)$ is Polish. Since the standard form of $M$ is unique, the group $\Aut(M)$ also acts naturally on $\rL^2(M)$. Then the $u$-topology is also the topology of pointwise norm convergence on $\rL^2(M)$.

We denote by $\Ad : \mathcal{U}(M) \rightarrow \Aut(M)$ the continuous homomorphism which sends a unitary $u$ to the corresponding inner automorphism $\Ad(u)$. We denote by $\Inn(M)$ the image of $\Ad$. We denote by $\Out(M)=\Aut(M)/\Inn(M)$ the quotient group. Let $\pi_M : \Aut(M) \to \Out(M)$ be the quotient map. A net $(\theta_i)_{i \in I}$ in $\Aut(M)$ satisfies $\pi_M(\theta_i) \to 1$ in $\Out(M)$ as $i \to \infty$ if and only if there exists a net $(u_i)_{i \in I}$ in $\mathcal{U}(M)$ such that $\Ad(u_i) \circ \theta_i \to \id_M$ as $i \to \infty$. 

Note that a net of unitaries $(u_i)_{i \in I}$ in $\mathcal{U}(M)$ is centralizing in $M$ if and only if $\Ad(u_i) \to \id_M$ as $i \to \infty$. Hence, when $M$ is a full factor, the definitions imply that the map $\Ad : \mathcal{U}(M) \rightarrow \Aut(M)$ is open on its range. Therefore, $\Inn(M)$ is isomorphic as a topological group to the quotient $\mathcal{U}(M)/\{ z \in \C \mid |z|=1 \}$. In particular, $\Inn(M)$ is complete hence closed in $\Aut(M)$ and $\Out(M)$ is a complete topological group.

\subsection*{Ultraproducts von Neumann algebras}
Let $M$ be any $\sigma$-finite von Neumann algebra. Let $I$ be any nonempty directed set and $\omega$ any {\em cofinal} ultrafilter on $I$, i.e.\ $\{i \in I : i \geq i_0\} \in \omega$ for every $i_0 \in I$. When $I = \N$, $\omega$ is cofinal if and only if $\omega$ is {\em nonprincipal}, i.e.\ $\omega \in \beta \N \setminus \N$. Define
\begin{align*}
\mathcal I_\omega(M) &= \left\{ (x_i)_i \in \ell^\infty(I, M) \mid x_i \to 0\ \ast\text{-strongly as } i \to \omega \right\} \\
\mathfrak M^\omega(M) &= \left \{ (x_i)_i \in \ell^\infty(I, M) \mid  (x_i)_i \, \mathcal I_\omega(M) \subset \mathcal I_\omega(M) \text{ and } \mathcal I_\omega(M) \, (x_i)_i \subset \mathcal I_\omega(M)\right\}.
\end{align*}
The multiplier algebra $\mathfrak M^\omega(M)$ is a $\rC^*$-algebra and $\mathcal I_\omega(M) \subset \mathfrak M^\omega(M)$ is a norm closed two-sided ideal. Following \cite[\S 5.1]{Oc85}, the quotient $\rC^{*}$-algebra $M^\omega := \mathfrak M^\omega(M) / \mathcal I_\omega(M)$ is a von Neumann algebra, known as the {\em Ocneanu ultraproduct} of $M$. We denote the image of $(x_i)_i \in \mathfrak M^\omega(M)$ by $(x_i)^\omega \in M^\omega$. Throughout this paper, we will use the notation from \cite{AH12} for ultraproducts.

\section{Proof of Theorem \ref{main amenable}}
Before we prove Theorem \ref{main amenable}, we recall the following definition.
\begin{df}
Let $\sigma : \Gamma \curvearrowright M$ be an action of a discrete group $\Gamma$ on a von Neumann algebra $M$. We say that $\sigma$ is:
\begin{itemize}
\item [$(\rm i)$] \emph{ergodic} if the only elements of $M$ fixed by $\sigma$ are the scalars.
\item [$(\rm ii)$] \emph{strongly ergodic} if for every cofinal ultrafilter $\omega$ on every directed set $I$, the action $\sigma^\omega : \Gamma \curvearrowright M^\omega$ is ergodic.
\item [$(\rm iii)$] \emph{centrally ergodic} if for every cofinal ultrafilter $\omega$ on every directed set $I$, the action $\sigma_\omega : \Gamma \curvearrowright M_\omega$ is ergodic.
\end{itemize}
\end{df}

\begin{rem}
By a standard diagonal extraction argument, if an action $\sigma$ is strongly ergodic then $\sigma^\omega$ must also be strongly ergodic. Similarly, if $\sigma$ is centrally ergodic, then $\sigma_\omega$ is strongly ergodic. We also make the following observation: if $\mathcal{Z}(M)$ is discrete, then an action $\Gamma \curvearrowright M$ is strongly ergodic (resp.\ centrally ergodic) if and only if $\Gamma$ acts transitively on the minimal projections of $\mathcal{Z}(M)$ and for some (hence any) minimal projection $p \in \mathcal{Z}(M)$ the action of the stabilizer $\Gamma_p \curvearrowright pMp$ is strongly ergodic (resp.\ centrally ergodic). 
\end{rem}

It is well known that a pmp action of an amenable group on a diffuse probability space can never be strongly ergodic. The key result we need to prove Theorem \ref{main amenable} is the following noncommutative version. This result was very recently obtained in \cite[Theorem 4.1]{PSV18} in the case where $\sigma$ is free (and $\varphi$ is a trace). We will show that the general case can be reduced to the free case.

\begin{thm}\label{not strongly ergodic}
Let $\sigma : \Gamma \curvearrowright (M,\varphi)$ be a state preserving action of a discrete group $\Gamma$ on a von Neumann algebra $M$ with a faithful normal state $\varphi$. If $\Gamma$ is amenable and $\sigma$ is strongly ergodic, then $M$ is finite dimensional.
\end{thm}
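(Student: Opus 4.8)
The plan is to argue by contraposition: assuming $\sigma$ is strongly ergodic, I will show that $M$ must be finite dimensional by reducing the statement to the free, trace-preserving case established in \cite[Theorem 4.1]{PSV18}.

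First I would reduce to the case where $M$ is a \emph{factor}. Since $\sigma$ preserves the faithful normal state $\varphi$, its restriction to $\cZ(M)$ preserves the faithful normal invariant state $\varphi|_{\cZ(M)}$ on the abelian algebra $\cZ(M)$. If $\cZ(M)$ had a diffuse part, restricting $\varphi$ there would produce a measure-preserving action of the amenable group $\Gamma$ on a diffuse probability space, which is never strongly ergodic; the resulting nontrivial asymptotically invariant sequence lies in $\cZ(M) \subset M$ and contradicts strong ergodicity of $\sigma$. Hence $\cZ(M)$ is atomic. A faithful normal invariant state forces the $\Gamma$-orbits of the minimal projections of $\cZ(M)$ to carry invariant probability weights, which must be constant on each orbit by transitivity; constancy and summability over an infinite orbit are incompatible, so every orbit is finite, and ergodicity then leaves a single finite orbit. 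Thus $\cZ(M)$ is finite dimensional, and using the remark following the definition of strong ergodicity I may pass to a minimal projection $p \in \cZ(M)$ and its stabilizer $\Gamma_p$ (a finite-index, hence amenable, subgroup), replacing $(M,\sigma,\Gamma)$ by $(pMp, \sigma|_{\Gamma_p}, \Gamma_p)$. This is again a state-preserving, strongly ergodic action, now on a factor, and $pMp$ is infinite dimensional if and only if $M$ is.

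The heart of the matter is then to reduce to a \emph{free} action. Let $\Gamma_0 = \ker(\overline{\sigma}) = \{ g \in \Gamma : \sigma_g \in \Inn(M) \}$, a normal subgroup, with $\sigma_g = \Ad(u_g)$ for a unitary cocycle representation $g \mapsto u_g$. The idea is to absorb this inner part into an amplification: form $N = M \rtimes_{\sigma|_{\Gamma_0}} \Gamma_0$. Because $\sigma|_{\Gamma_0}$ is implemented by the $u_g$, the action of $\Gamma$ on $N$ extending $\sigma$ and conjugating the canonical unitaries has $\Gamma_0$ acting innerly, so it descends to a (cocycle) action of the amenable quotient $\Lambda = \Gamma/\Gamma_0$ on $N$; since $\overline{\sigma}$ has kernel exactly $\Gamma_0$ and $N$ is a factor, the induced map $\Lambda \to \Out(N)$ is injective, so every nontrivial element acts properly outerly and the $\Lambda$-action is free. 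One must then verify that $N$ is infinite dimensional iff $M$ is, that the dual state is faithful, normal and $\Lambda$-invariant, and---this is the main obstacle---that strong ergodicity of $\sigma$ on $M$ transfers to strong ergodicity of $\Lambda \curvearrowright N$, by computing $N^\omega = M^\omega \rtimes_{\sigma^\omega|_{\Gamma_0}} \Gamma_0$ and relating $\Lambda^\omega$-invariant elements of $N^\omega$ to $\Gamma$-invariant elements of $M^\omega$. When $\Gamma_0$ is infinite the amplification is no longer finite dimensional and this transfer is most delicate; I expect to isolate the infinite inner part as a degenerate case in which nontrivial asymptotically invariant sequences are produced directly by F\o lner averaging of the cocycle representation $g \mapsto u_g$, again contradicting strong ergodicity, so that the amplification argument is only needed for finite $\Gamma_0$.

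Finally, I would remove the traciality hypothesis of \cite[Theorem 4.1]{PSV18}. The key point is that strong ergodicity of $\Lambda \curvearrowright N$ is equivalent to a spectral gap property for the unitary representation of $\Lambda$ on $\rL^2(N) \ominus \C$ with respect to the symmetrized norm $\| \cdot \|_\varphi^\sharp$, a Hilbert-space condition insensitive to the distinction between a state and a trace; this is the spectral gap characterization of fullness from \cite{Ma16}. On the other hand, the F\o lner-type combinatorics in the proof of \cite[Theorem 4.1]{PSV18} only use freeness of the action to contradict this spectral gap when $N$ is infinite dimensional. Running that argument in the $\ast$-strong, Ocneanu ultraproduct framework of Section 2 produces a nontrivial $\Lambda^\omega$-invariant element of $N^\omega$, contradicting strong ergodicity of $\Lambda \curvearrowright N$ and hence of $\sigma$. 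Tracing the reductions back, $M$ must be finite dimensional.
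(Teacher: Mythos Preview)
Your reduction to the factor case is fine and matches the paper, but the heart of your plan---absorbing the inner part by passing to the crossed product $N = M \rtimes_{\sigma|_{\Gamma_0}} \Gamma_0$---has a real gap. When $\Gamma_0$ acts by inner automorphisms $\sigma_g = \Ad(u_g)$, the elements $w_g = u_g^*\lambda_g$ commute with $M$ and satisfy $w_g w_h = \overline{c(g,h)}\, w_{gh}$ for the $2$-cocycle $c$ determined by $u_g u_h = c(g,h) u_{gh}$; hence $N \cong M \ovt L_{\bar c}(\Gamma_0)$. There is no reason for the twisted group algebra $L_{\bar c}(\Gamma_0)$ to be a factor, so your assertion ``$N$ is a factor'' is unjustified, and without it the claim that $\Lambda \to \Out(N)$ is injective (hence that the $\Lambda$-action is free) does not follow. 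Worse, $\cZ(L_{\bar c}(\Gamma_0))$ sits inside $N$ as a $\Gamma$-invariant abelian subalgebra that is in general diffuse, so trying to reduce again by cutting with a minimal central projection of $N$ leads you back to exactly the kind of statement you are trying to prove. The same tensor decomposition also undermines your strong ergodicity transfer: any asymptotically central sequence in $L_{\bar c}(\Gamma_0)$ gives $\Lambda$-almost-invariant elements of $N^\omega$ that have nothing to do with $(M^\omega)^\Gamma$, so $\Lambda \curvearrowright N$ can fail to be strongly ergodic even when $\Gamma \curvearrowright M$ is. Your fallback---``F\o lner averaging of the cocycle $g \mapsto u_g$'' when $\Gamma_0$ is infinite---does not obviously produce anything almost invariant under the full group $\Gamma$, only under $\Gamma_0$.

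The paper handles the inner part differently and stays \emph{inside} $M$ rather than amplifying. After first reducing to the tracial case (by passing to the centralizer $M^\omega_{\varphi^\omega}$ and invoking a short lemma that $M^\omega_{\varphi^\omega}$ finite dimensional forces $M$ finite dimensional---this avoids re-running \cite{PSV18} for states), one lets $P \subset M$ be the von Neumann algebra generated by all implementing unitaries $u_g$, $g \in H = \ker\overline\sigma$. Then $P$ is $\Gamma$-invariant and amenable (it is generated by a projective representation of the amenable group $H$), so strong ergodicity restricts to $P$; a separate spectral-gap result (Theorem~\ref{not centrally ergodic}) then forces $P$ to be a direct sum of full, hence finite dimensional, factors. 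Passing to $Q = P' \cap M$ and cutting by a minimal central projection, the residual action of $\Gamma_p/H$ is genuinely outer, and \cite[Theorem 4.1]{PSV18} applies directly. The key idea you are missing is that the F\o lner argument belongs on the \emph{subalgebra} $P$ (via central ergodicity and Proposition~\ref{gap centrally ergodic}), not on an amplification of $M$.
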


In order to prove this non-strong ergodicity theorem, we first need to understand central ergodicity. We show that central ergodicity for group actions on $\II_1$ factors admits a useful spectral gap characterization which generalizes \cite[Theorem 2.1]{Co75}

\begin{prop} \label{gap centrally ergodic}
Let $\sigma : \Gamma \curvearrowright M$ be an action of a discrete group $\Gamma$ on a type $\II_1$ factor $M$. Then $\sigma$ is centrally ergodic if and only if the representation of $\mathcal{U}(M) \rtimes_\sigma \Gamma$ on $\rL^2(M)$ has spectral gap.
\end{prop}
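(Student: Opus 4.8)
The plan is to read the statement as a comparison between two ``no almost-invariance'' conditions and to prove the equivalence by contraposition in both directions, the bridge being the standard correspondence between nontrivial $\Gamma$-fixed central sequences and almost-invariant vectors in $\rL^2(M)$. Write $G=\mathcal U(M)\rtimes_\sigma\Gamma$ and let $\pi$ be its unitary representation on $\rL^2(M)$, where $u\in\mathcal U(M)$ acts by $\Ad(u)\colon\xi\mapsto u\xi u^*$ and $g\in\Gamma$ acts by $\sigma_g$ (this is a genuine representation since $\sigma_g\,\Ad(u)\,\sigma_g^{-1}=\Ad(\sigma_g(u))$). Let $\widehat 1\in\rL^2(M)$ be the canonical tracial vector and $\rL^2_0(M)=\rL^2(M)\ominus\C\widehat 1$. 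Since $M$ is a factor, the $\Ad(\mathcal U(M))$-invariant vectors are exactly the vectors commuting with $M$, namely $\C\widehat 1$, and these are automatically $\Gamma$-invariant; hence the $G$-invariant vectors are $\C\widehat 1$ and $\rL^2_0(M)$ is $G$-invariant. I read ``$\pi$ has spectral gap'' as: $\pi|_{\rL^2_0(M)}$ has no almost-invariant vectors, i.e.\ there are finite sets $F\subset\mathcal U(M)$, $S\subset\Gamma$ and $\kappa>0$ with $\max_{u\in F}\|u\xi u^*-\xi\|_2+\max_{g\in S}\|\sigma_g\xi-\xi\|_2\ge\kappa\|\xi\|_2$ for every $\xi\in\rL^2_0(M)$.

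The easy implication is that spectral gap forces central ergodicity, and I argue by contraposition. If $\sigma$ is not centrally ergodic, there is a cofinal ultrafilter $\omega$ and a $\sigma_\omega$-fixed element $X\in M_\omega$ with $X=X^*$, $\tau_\omega(X)=0$ and $\|X\|_2=1$, represented by a bounded net $(x_i)$ of self-adjoints with $\tau(x_i)\to0$, $\|x_i\|_2\to1$, $\|[a,x_i]\|_2\to0$ for all $a\in M$ and $\|\sigma_g(x_i)-x_i\|_2\to0$ for all $g\in\Gamma$. Then the vectors $\widehat{x_i}\in\rL^2_0(M)$ satisfy $\|u\widehat{x_i}u^*-\widehat{x_i}\|_2=\|[u,x_i]\|_2\to0$ and $\|\sigma_g\widehat{x_i}-\widehat{x_i}\|_2=\|\sigma_g(x_i)-x_i\|_2\to0$, so they are almost invariant in $\rL^2_0(M)$, contradicting the spectral gap.

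The substantial implication is the converse, where a pull-down from $\rL^2$ to bounded operators is needed. Assume there is no spectral gap, so there is a net of unit vectors $\xi_i\in\rL^2_0(M)$ with $\|u\xi_iu^*-\xi_i\|_2\to0$ for all $u$ and $\|\sigma_g\xi_i-\xi_i\|_2\to0$ for all $g$. Since the modular conjugation $J$ commutes with every $\Ad(u)$ and every $\sigma_g$, replacing $\xi_i$ by its self-adjoint part and renormalizing I may assume $\xi_i=\xi_i^*$, still in $\rL^2_0(M)$ and still almost invariant; I regard $\xi_i$ as a self-adjoint operator affiliated with $M$ with $\tau(\xi_i)=0$ and $\|\xi_i\|_2=1$, with spectral projections $p^i_s=\mathbf 1_{(s,\infty)}(\xi_i)\in M$. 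The engine of the pull-down is the standard spectral-layer inequality
\[
\int_{\R}\big\|\mathbf 1_{(s,\infty)}(\xi)-\mathbf 1_{(s,\infty)}(\eta)\big\|_2^2\,ds\le\|\xi-\eta\|_2\,(\|\xi\|_2+\|\eta\|_2)
\]
for self-adjoint $\xi,\eta\in\rL^2(M)$. Applying it with $\eta=u\xi_iu^*$, for which $\mathbf 1_{(s,\infty)}(\eta)=up^i_su^*$, and with $\eta=\sigma_g\xi_i$, for which $\mathbf 1_{(s,\infty)}(\eta)=\sigma_g(p^i_s)$, yields $\int_{\R}\|[u,p^i_s]\|_2^2\,ds\le2\|u\xi_iu^*-\xi_i\|_2$ and $\int_{\R}\|\sigma_g(p^i_s)-p^i_s\|_2^2\,ds\le2\|\sigma_g\xi_i-\xi_i\|_2$. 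The key structural point is that \emph{one and the same} spectral family $(p^i_s)_s$ simultaneously witnesses almost-$\Ad$-invariance and almost-$\sigma$-invariance: the functional calculus is equivariant, so the group $\Gamma$ is carried along for free and causes no extra difficulty beyond Connes' original $\Gamma=\{1\}$ case. Consequently, for every finite $F$ and $S$ the set of levels $s$ at which some $\|[u,p^i_s]\|_2$ or $\|\sigma_g(p^i_s)-p^i_s\|_2$ fails to be small has small Lebesgue measure, so a suitable level $s_i$ produces an almost central, almost $\Gamma$-invariant projection; assembling the $p^i_{s_i}$ along a cofinal ultrafilter gives a $\Gamma$-fixed projection in $M_\omega$, and it remains only to make it non-scalar so as to contradict central ergodicity.

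The main obstacle is exactly this last point: forcing the selected projection to have trace bounded away from $0$ and $1$. A priori the distribution of $\xi_i$ may concentrate, with a dominant atom $P_i=\mathbf 1_{\{s_i\}}(\xi_i)$ of trace $>1/2$ (necessarily near $0$, since $\tau(\xi_i)=0$), the entire $\rL^2$-mass then sitting on the small complementary projection $Q_i=1-P_i$; in that ``spike'' situation every spectral projection of $\xi_i$ is nearly trivial. I would resolve this Connes-style by zooming into the corner: writing $P_i=p^i_{s_i-\varepsilon}-p^i_{s_i+\varepsilon}$ as a difference of good levels shows that $P_i$, hence $Q_i$, is itself almost central and almost $\Gamma$-invariant, and the renormalized vector $Q_i\xi_iQ_i/\|Q_i\xi_iQ_i\|_2$ is an almost central, almost $\Gamma$-invariant unit vector in the corner $Q_iMQ_i$ whose distribution is now non-degenerate, so one of its spectral projections has trace close to $1/2$ in $Q_iMQ_i$ and gives the required nontrivial projection. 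The one point demanding care throughout is the equivariance of all these choices under $\sigma_g$ — so that the limiting projection is genuinely $\sigma_\omega$-fixed in $M_\omega$ — but this is guaranteed by the naturality of the functional calculus. In this way the proposition refines \cite[Theorem 2.1]{Co75}, to which it reduces when $\Gamma=\{1\}$.
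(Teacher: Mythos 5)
Your handling of the easy implication (spectral gap forces central ergodicity) is correct, and your Connes-style strategy for the converse correctly locates the difficulty in the concentration (``spike'') scenario; but the fix you propose for that scenario does not work, and this is a genuine gap. The projection you extract from the corner $Q_iMQ_i$ has trace roughly $\tau(Q_i)/2$ with $\tau(Q_i)\to 0$, and any sequence of projections $p_i$ with $\tau(p_i)\to 0$ satisfies $\|p_i-\tau(p_i)1\|_2^2=\tau(p_i)(1-\tau(p_i))\to 0$, so it represents the \emph{zero} element of $M_\omega$ and cannot contradict central ergodicity. The obstruction is intrinsic to your method: in the spike case the distribution of $\xi_i$ has the form $(1-\epsilon_i)\delta_{c_i}+\epsilon_i(\cdots)$ with $c_i\to 0$ and $\epsilon_i\to 0$, so for \emph{any} bounded Borel function $f$ one gets $\|f(\xi_i)-f(c_i)1\|_2^2\le 4\|f\|_\infty^2\,\epsilon_i\to 0$; no functional calculus applied to $\xi_i$, in a corner or not, can yield a nontrivial central sequence. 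What your argument actually produces in that case is a sequence of nonzero, almost central, almost $\Gamma$-invariant projections whose traces tend to $0$ but which are almost central \emph{after normalization by the trace} --- that is, a failure of \emph{local} spectral gap, not of central ergodicity.

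Bridging exactly this gap is the content of \cite[Theorem B]{Ma17} (``strongly ergodic actions have local spectral gap''), which the paper invokes as a black box: central ergodicity yields a projection $p$ with $\tau(p)>\frac12$ and a spectral gap inequality on $pMp$ with commutators compressed by $p$, and an elementary step (enlarging the generating set by $v=2p-1$ and a unitary $w$ with $w(1-p)w^*\le p$) then upgrades this local inequality to the global one. So your proof would be complete for the genuinely nondegenerate case, where some level set has trace bounded away from $0$ and $1$ on a set of levels of definite measure, but to cover the concentration case you must either quote \cite[Theorem B]{Ma17} or reproduce its exhaustion argument; the corner renormalization alone cannot do it.
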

\begin{proof}
Let $N=M \rtimes_\sigma \Gamma$. Let $\Sigma=\mathcal{U}(M) \cup \{u_g \mid g \in \Gamma \} \subset N$. A direct application of \cite[Theorem B]{Ma17} shows that we can find a projection $p \in M$ with $\tau(p) > \frac{1}{2}$, finite sets $S \subset \mathcal{U}(M)$ and $K \subset \Gamma$, and a constant $\kappa > 0$ such that for all $x \in pMp$ with $\tau(x)=0$ we have
\[ \|x\|_2 \leq \kappa \left( \sum_{u \in S} \|p(ux-xu)p \|_2 + \sum_{g \in K} \| p(u_gx-xu_g)p \|_2 \right) \] 
Let $v=2p-1 \in \mathcal{U}(M)$ and let $w \in \mathcal{U}(M)$ be any unitary which satisfies $w(1-p)w^* \leq p$. Let $S'=S \cup \{v,w \}$. Then it is not hard to check that there exists some constant $\kappa' > 0$ such that for all $x \in M$ we have
\[ \|x-\tau(x) \|_2 \leq \kappa' \left( \sum_{u \in S'} \|ux-xu \|_2 + \sum_{g \in K} \| \sigma_g(x)-x \|_2 \right) \]
\end{proof}

\begin{thm}\label{not centrally ergodic}
Let $\sigma : \Gamma \curvearrowright M$ be an action of a discrete group $\Gamma$ on a finite von Neumann algebra $M$. Suppose that $\Gamma$ is amenable. If $\sigma$ is centrally ergodic then $M$ is a direct sum of full factors.
\end{thm}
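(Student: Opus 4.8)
The plan is to establish the two defining properties of a direct sum of full factors separately: first that the centre $\mathcal Z(M)$ is atomic, and then that each central corner is a full factor. For the first point I would restrict the central ergodicity to the centre. Since $\sigma$ preserves $\mathcal Z(M)$, the subalgebra $\mathcal Z(M)_\omega \subseteq M_\omega$ of central sequences valued in $\mathcal Z(M)$ is $\sigma_\omega$-invariant, and any $\sigma_\omega$-fixed point lying in it is a fixed point of $\sigma_\omega$ in all of $M_\omega$, hence scalar by central ergodicity. As $\mathcal Z(M)$ is abelian we have $\mathcal Z(M)_\omega = \mathcal Z(M)^\omega$, so this says exactly that the probability-measure-preserving action $\sigma|_{\mathcal Z(M)}$ is strongly ergodic. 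Since $\Gamma$ is amenable, the well-known fact that an amenable pmp action on a diffuse probability space is never strongly ergodic forces $\mathcal Z(M)$ to have no diffuse part, i.e.\ $\mathcal Z(M)$ is atomic.

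Granting that $\mathcal Z(M)$ is atomic, I would invoke the Remark following the definition of central ergodicity: central ergodicity is then equivalent to $\Gamma$ acting transitively on the minimal projections of $\mathcal Z(M)$ together with central ergodicity of the stabiliser action $\Gamma_p \curvearrowright pMp$ for a (hence any) minimal projection $p$. Thus $M = \bigoplus_k p_k M p_k$ is a direct sum of mutually isomorphic finite factors indexed by a single $\Gamma$-orbit, and it suffices to prove that the corner $P := pMp$ is full. Here $\Gamma_p$, being a subgroup of $\Gamma$, is again amenable, so after replacing $\Gamma$ by $\Gamma_p$ the whole problem reduces to the following statement: an amenable group acting centrally ergodically on a finite factor $P$ forces $P$ to be full. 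If $P$ is finite dimensional this is automatic, so we may assume $P$ is of type $\II_1$.

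To attack the factor case I would use Proposition \ref{gap centrally ergodic}, which reformulates central ergodicity of $\Gamma \curvearrowright P$ as a spectral gap for $\mathcal U(P) \rtimes_\sigma \Gamma$ on $\rL^2(P)$; fullness of $P$ is the same statement for $\mathcal U(P)$ alone. The point is therefore to eliminate the $\Gamma$-generators, and this is where amenability must enter. Suppose $P$ is not full, i.e.\ $P$ has property Gamma, so that the central sequence algebra $P_\omega$ is diffuse; by the Remark, central ergodicity upgrades to strong ergodicity of $\sigma_\omega \colon \Gamma \curvearrowright P_\omega$. I would then seek a contradiction from the general principle that an amenable group cannot act in a trace-preserving and strongly ergodic way on a \emph{diffuse} finite von Neumann algebra. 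When $P$ is not McDuff the centre $\mathcal Z(P_\omega)$ is already diffuse and $\sigma_\omega$-invariant, so restricting to it reduces the contradiction to the abelian situation handled in the first paragraph.

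The hard part will be the McDuff case, where $P_\omega$ is a diffuse type $\II_1$ \emph{factor} and the abelian argument is unavailable: no invariant or almost-invariant directions can be read off from a centre, and one cannot simply average over a F{\o}lner set because, for an ergodic action, such averages of a fixed central sequence tend to $0$ in $\rL^2(P)$. My plan is to handle this by reducing the (possibly non-free) strongly ergodic action $\sigma_\omega \colon \Gamma \curvearrowright P_\omega$ to a free one and then applying the free tracial non-strong-ergodicity theorem of Popa--Shlyakhtenko--Vaes \cite[Theorem 4.1]{PSV18}, which would force $P_\omega$ to be finite dimensional, contradicting its diffuseness. Carrying out this reduction to freeness while preserving both strong ergodicity and the trace is, I expect, the main technical obstacle of the proof.
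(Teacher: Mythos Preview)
Your reduction to the case where $M$ is a $\II_1$ factor is correct and matches the paper exactly. The divergence comes in the factor case, and there the gap you flag is real and more serious than you suggest.

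Your plan is to pass to $P_\omega$, use that $\sigma_\omega$ is strongly ergodic there, and eventually invoke \cite[Theorem~4.1]{PSV18} after reducing to a free action. But the only reduction-to-freeness argument available in this context is essentially the one carried out in the paper's proof of Theorem~\ref{not strongly ergodic}: one isolates the inner part $H=\{g:\sigma_g\text{ inner}\}$, looks at the subalgebra generated by the implementing unitaries, and shows it is atomic. That last step is proved \emph{by applying Theorem~\ref{not centrally ergodic}}. So your proposed route, followed in the natural way, is circular. Your case split is also slightly imprecise --- in the McDuff case $P_\omega$ is of type $\II_1$ but need not be a factor --- though this is minor compared to the circularity issue.

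The paper bypasses all of this with a direct construction contradicting Proposition~\ref{gap centrally ergodic}, and the key idea is precisely the one you dismissed: one \emph{can} F{\o}lner-average, provided one does it in $\rL^1$ rather than $\rL^2$ and renormalises. Non-fullness yields a net of projections $p_i\in P$ with $\tau(p_i)\to 0$ and $\tau(p_i)^{-1}\|up_i-p_iu\|_1\to 0$ for all $u\in\mathcal U(P)$. Given a F{\o}lner set $F$ for a finite $K\subset\Gamma$ at tolerance $\varepsilon$, set
\[
x_i=\Bigl(\frac{1}{|F|\,\tau(p_i)}\sum_{g\in F}\sigma_g(p_i)\Bigr)^{1/2}.
\]
Then $\|x_i\|_2=1$ by construction; $\tau(x_i)\le\|q_i\|_2\to 0$ since the support $q_i=\bigvee_{g\in F}\sigma_g(p_i)$ has trace at most $|F|\tau(p_i)$; Powers--St{\o}rmer gives $\|ux_iu^*-x_i\|_2^2\le\|ux_i^2u^*-x_i^2\|_1\to 0$ from the $\rL^1$-centrality of $p_i$; and $\|\sigma_g(x_i)-x_i\|_2^2\le\|\sigma_g(x_i^2)-x_i^2\|_1\le |gF\triangle F|/|F|<\varepsilon$ for $g\in K$. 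This contradicts spectral gap directly, with no appeal to \cite{PSV18} and no McDuff/non-McDuff dichotomy. The point you missed is that averaging the \emph{squares} with the $\tau(p_i)^{-1}$ normalisation keeps the $\rL^1$-norm fixed at $1$, so nothing collapses to zero.
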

\begin{proof}
First observe that if $\sigma$ is centrally ergodic then its restriction to the center $\mathcal{Z}(M)$ must be strongly ergodic. Since $\Gamma$ is amenable, this only happens if $\mathcal{Z}(M)$ is discrete and $\Gamma$ acts transitively on the minimal projections of $\mathcal{Z}(M)$. Moreover, for any minimal projection $p \in \mathcal{Z}(M)$, the action of its stabilizer $\Gamma_p=\{g \in \Gamma \mid \sigma_g(p)=p\}$ on $pMp$ must be centrally ergodic. Hence the problem reduces to the case where $M$ is a $\II_1$ factor. So suppose that $M$ is now a $\II_1$ factor which is not full. We will show that the representation of $\mathcal{U}(M) \rtimes_\sigma \Gamma$ on $\rL^2(M)$ does not have spectral gap and this will contradict the conclusion of Proposition \ref{gap centrally ergodic}. Since $M$ is not full, we can find a net $(p_i)_{i \in I}$ of nonzero projections $p_i \in M$ such that $\lim_i p_i =0$ and $\lim_i \frac{1}{\tau(p_i) } \|up_i-p_i u \|_1=0$ for all $u \in \mathcal{U}(M)$. Fix $\varepsilon > 0$ and $K \subset \Gamma$ a finite subset. Then, since $\Gamma$ is amenable, we can find $F \subset \Gamma$ such that $| gF \triangle F | < \varepsilon |F|$ for all $g \in K$. Let 
\[ x_i=\left( \frac{1}{|F| \tau(p_i)}\sum_{ g \in F} \sigma_g(p_i) \right)^{1/2} \in M \]
For all $i$, we have $\|x_i \|_2=1$ and if we let $q_i=\mathrm{supp}(x_i)=\vee_{g \in F} \sigma_g(p_i)$, we have
 $$\tau(x_i) =\tau(x_iq_i) \leq \|x_i\|_2 \|q_i\|_2=\|q_i\|_2.$$
 Since $\lim_i q_i = 0$, we get $\lim_i \tau(x_i)=0$. We also have 
\[ \| ux_iu^{*}-x_i  \|_2^2 \leq \| ux_i^{2}u^{*}-x_i^{2}\|_1 \leq \frac{1}{|F|\tau(p_i)}\sum_{g \in F}\|\sigma^{-1}_g(u)p_i-p_i \sigma^{-1}_g(u) \|_1 \to 0 \] 
Moreover, for all $g \in K$, we have
 \[ \| \sigma_g(x_i)-x_i \|_2^2 \leq \| \sigma_g(x_i^{2})-x_i^{2} \|_1 \leq  \frac{|gF \triangle F|}{|F|}  \leq \varepsilon\]
 Since $K$ and $\varepsilon > 0$ are arbitrary, this shows that the representation of $\mathcal{U}(M) \rtimes_\sigma \Gamma$ on $\rL^2(M)$ does not have spectral gap, as we wanted.
\end{proof}

For the proof of Theorem \ref{not strongly ergodic}, we need a lemma to reduce to the finite case.

\begin{lem}
Let $M$ be any von Neumann algebra with a faithful normal state $\varphi$. If for some ultrafilter $\omega \in \beta \N \setminus \N$, we have that $M^\omega_{\varphi^{\omega}}$ is finite dimensional, then $M$ itself is finite dimensional.
\end{lem}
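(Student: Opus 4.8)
The plan is to prove the contrapositive: assuming $M$ is infinite dimensional, I will produce infinitely many pairwise orthogonal nonzero projections inside $M^\omega_{\varphi^\omega}$. The starting point is that $\varphi^\omega$ restricts to a faithful normal trace $\tau$ on the finite von Neumann algebra $M^\omega_{\varphi^\omega}$, and that a finite dimensional algebra with a faithful trace realizes only finitely many values $\{\tau(q) : q \text{ a projection}\}$. Hence it suffices to exhibit projections of $M^\omega_{\varphi^\omega}$ taking infinitely many distinct values under $\varphi^\omega$. Two facts are used throughout. First, the centralizer $M_\varphi=\{x\in M : x\varphi=\varphi x\}$ embeds into $M^\omega_{\varphi^\omega}$ via constant sequences, since the modular group $\sigma^{\varphi^\omega}$ restricts to $\sigma^\varphi$ on $M$. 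Second, by \cite{AH12}, a bounded sequence $(x_n)_n$ defines an element $(x_n)^\omega\in M^\omega_{\varphi^\omega}$ exactly when $\lim_{n\to\omega}\|x_n\varphi-\varphi x_n\|=0$, and then $\varphi^\omega((x_n)^\omega)=\lim_{n\to\omega}\varphi(x_n)$.

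I would first dispose of all cases in which $M_\varphi$ is already infinite dimensional, which covers everything except type $\III$. Write $M=M_{\rm at}\oplus M_{\rm diff}$ for the atomic and diffuse parts; both summands are $\sigma^\varphi$-invariant, so $\varphi$ and $M_\varphi$ split accordingly. If $\cZ(M)$ is infinite dimensional it contains infinitely many orthogonal projections, all lying in $M_\varphi$ because $\cZ(M)\subseteq M_\varphi$; so I may assume $\cZ(M)$ is finite dimensional and treat each factor summand. For a type $\rm I_\infty$ summand $B(H)$, faithfulness forces the density operator $\rho$ of $\varphi$ to be injective and trace class on the infinite dimensional $H$, hence to have either infinitely many distinct eigenvalues or an eigenvalue of infinite multiplicity; in the first case the spectral projections of $\rho$, in the second a corner $pB(H)p$, give an infinite dimensional subalgebra of $M_\varphi=\{\rho\}'\cap B(H)$. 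The same argument applied to the Radon--Nikodym derivative $h$ of $\varphi$ with respect to a trace handles the type $\rm II_\infty$ case (finitely many spectral values of $h$ would make $1$ a finite projection, absurd) and the type $\rm II_1$ case (if $h$ has finite spectrum then $\{h\}'\cap M$ is a finite direct sum of $\rm II_1$ corners). In each case $M_\varphi$, hence $M^\omega_{\varphi^\omega}$, is infinite dimensional.

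It remains to treat the genuinely diffuse situation, where $M_\varphi$ may be finite dimensional (e.g.\ a type $\III$ factor with a state of trivial centralizer); this is the heart of the matter. I would reduce it to the following \emph{halving step}: for every diffuse von Neumann algebra $N$ with faithful normal state $\psi$ and every $\varepsilon>0$, there is a projection $f\in N$ with $\|f\psi-\psi f\|<\varepsilon$ and $|\psi(f)-\tfrac12\psi(1)|<\varepsilon$. Granting this, I claim $M^\omega_{\varphi^\omega}$ has no minimal projection and is therefore infinite dimensional. Indeed, working with $N=M_{\rm diff}$ (whose centralizer embeds in $M^\omega_{\varphi^\omega}$), let $e=(e_n)^\omega$ be any nonzero projection, so $\lim_{n\to\omega}\|e_n\varphi-\varphi e_n\|=0$ and $\lim_{n\to\omega}\varphi(e_n)=\varphi^\omega(e)>0$. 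For $\omega$-almost every $n$ the corner $e_nNe_n$ is again diffuse, so applying the halving step inside $(e_nNe_n,\ \varphi(e_n\,\cdot\,e_n))$ with error $1/n$ yields $f_n\le e_n$ with $\varphi(f_n)\approx\tfrac12\varphi(e_n)$; since $\|e_n\varphi-\varphi e_n\|\to_\omega 0$, the compressed state and $\varphi$ are asymptotically equal, so also $\|f_n\varphi-\varphi f_n\|\to_\omega 0$. Then $f=(f_n)^\omega\le e$ lies in $M^\omega_{\varphi^\omega}$ with $0<\varphi^\omega(f)=\tfrac12\varphi^\omega(e)<\varphi^\omega(e)$, so $e$ is not minimal.

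The main obstacle is the halving step itself, that is, the construction of approximately $\varphi$-central projections of prescribed trace in a diffuse algebra, equivalently the diffuseness of $M^\omega_{\varphi^\omega}$. When $M_\varphi$ is diffuse this is immediate, but in the type $\III$ case with small centralizer a naive averaging over the modular flow $\sigma^\varphi$ fails: forcing an element toward $\varphi$-centrality drives it toward a scalar and destroys its spectral structure, so a single sequence cannot be centralized by hand. The correct argument exploits the diffuseness to spread the construction over many asymptotically independent regions, producing approximately $\sigma^\varphi$-invariant, approximately two-dimensional subalgebras (spectral subspaces of $\sigma^\varphi$ of small Arveson spectrum, combined with an exhaustion/maximality argument in the spirit of Connes), and this is where the real work lies; I would carry it out using the modular machinery and the ultraproduct framework of \cite{AH12}. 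Once the halving step is in hand, the preceding paragraphs combine to give the contrapositive, and the lemma follows.
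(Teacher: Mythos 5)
Your reduction to the diffuse (essentially type $\III$) case is sound, and the bookkeeping with constant sequences and $\varphi$-centralizing representatives is correct. But the proposal has a genuine gap precisely where you locate it yourself: the \emph{halving step} is never proved, and it is not a deferred technicality --- in the only hard case it \emph{is} the lemma. For a type $\III$ algebra $N$ with a state $\psi$ whose centralizer is finite dimensional, producing projections $f$ with $\|f\psi-\psi f\|$ small and $\psi(f)\approx\tfrac12$ is equivalent to showing that $N^\omega_{\psi^\omega}$ is diffuse, which is a deep theorem of Ando--Haagerup resting (in the $\III_1$ case) on Connes--St{\o}rmer transitivity. As you note, naive averaging over the modular flow collapses the spectral structure, and the alternative you sketch (small Arveson spectrum plus an exhaustion ``in the spirit of Connes'') is a description of where such a proof might live, not a proof. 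As written, the argument does not establish the statement.

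The paper sidesteps this entirely by a shorter structural reduction followed by citation. Since $M_\varphi\subseteq M^\omega_{\varphi^\omega}$ is finite dimensional, pick a minimal projection $e\in M_\varphi$; then $(eMe)_{e\varphi e}=\C$, and by \cite[Lemma 5.3]{AH12} a von Neumann algebra carrying a state with trivial centralizer is either $\C$ or a type $\III_1$ factor. In the latter case $(eMe)^\omega_{(e\varphi e)^\omega}$ is a $\II_1$ factor by \cite[Theorem 4.20 and Proposition 4.24]{AH12}, contradicting finite dimensionality of $M^\omega_{\varphi^\omega}$; hence $eMe=\C e$, every minimal projection of $M_\varphi$ is minimal in $M$, and $M$ is finite dimensional. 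Note that this dichotomy handles all types at once, so the type-by-type analysis of your first two paragraphs becomes unnecessary. If you want to keep your contrapositive route, the honest fix is to replace the halving step by exactly these references --- at which point you have essentially rediscovered the paper's proof in a longer form.
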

\begin{proof}
Suppose that $M^\omega_{\varphi^{\omega}}$ is finite dimensional. Then $M_\varphi$ is also finite dimensional. Let $e$ be a minimal projection in $M_\varphi$. Then $(eMe)_{e\varphi e}=\C$. Therefore, by \cite[Lemma 5.3]{AH12}, either $eMe=\C e$ or $eMe$ is a type $\III_1$ factor. But if $eMe$ is a type $\III_1$ factor, then $(eMe)^\omega_{(e\varphi e)^\omega}$ is a $\II_1$ factor by \cite[Theorem 4.20 and Proposition 4.24]{AH12}. This is not possible by assumption. Hence $eMe=\C e$. This shows that $e$ is also minimal in $M$ and we conclude easily that $M$ is finite dimensional. 
\end{proof}

\begin{proof}[Proof of Theorem \ref{not strongly ergodic}]
For the same reason as in the proof of Theorem \ref{not centrally ergodic}, we can reduce to the case where $M$ is a factor. Moreover, if $\sigma : \Gamma \curvearrowright M$ is strongly ergodic, then the action $\sigma^\omega : \Gamma \curvearrowright M^\omega_{\varphi^\omega}$ is also strongly ergodic and if $M^\omega_{\varphi^\omega}$ is finite dimensional then $M$ itself is finite dimensional. Therefore, the proof reduces easily to the case where $M$ is $\II_1$ factor and $\varphi$ is a trace. Note also that we can always find a finitely generated subgroup $G < \Gamma$ such that $\sigma|_G$ is still strongly ergodic. Hence we may assume that $\Gamma$ is countable. Let $H=\{ g \in \Gamma \mid \sigma_g \text{ is inner} \}$. Let $P$ be the von Neumann algebra generated by all the unitaries $u \in M$ for which there exists $g \in H$ such that $\sigma_g=\Ad(u)$. Note that $P$ is amenable because it is generated by an amenable group of unitaries. Moreover, it is globally invariant under the action $\sigma$, hence $\sigma|_P$ must also be strongly ergodic and in particular centrally ergodic. Therefore, by Theorem \ref{not centrally ergodic}, $P$ is a direct sum of full factors which means that $P$ is discrete since it is amenable. Let $Q=P' \cap M$. The action of $\Gamma$ on $Q$ is again strongly ergodic. Hence $Q$ has discrete center and for any minimal projection $p$ in the center of $Q$, the action of its stabilizer $\Gamma_p$ on $pQp$ is also strongly ergodic. Note that $H$ acts trivially on $Q$, hence we have a strongly ergodic action of $\Gamma_p/H$ on $pQp$, and we claim that this action of $\Gamma_p/H$ is outer. Indeed, if $g \in \Gamma_p$ is such that $\sigma_g|_{pQp}$ is inner, then $\sigma_g|pMp$ is also inner because $pPp$ is a finite dimensional factor and $pMp=pPp \otimes pQp$. Since $M$ is a factor, this implies that $\sigma_g$ is inner and therefore $g \in H$. This shows that the action of $\Gamma_p/H$ on $pQp$ is outer. Now, $pQp$ is a factor which admits a strongly ergodic outer action of an amenable group. Therefore $pQp$ is trivial by \cite[Theorem 4.1.$(\rm ii)$]{PSV18}. We conclude that $pMp=pPp \otimes pQp \cong pPp$ is a finite dimensional factor, hence $M$ is also a finite dimensional factor.
\end{proof}

\begin{proof}[Proof of Theorem \ref{main amenable}]
Let $I$ be the directed set of all neighborhoods of the identity in $\Aut(M)$ and take $\omega$ any cofinal ultrafilter on $I$. Let $P=M' \cap (M \rtimes_\sigma \Gamma)^\omega$. Note that $P$ is globally invariant under the action $\sigma^\omega : \Gamma \curvearrowright (M \rtimes_\sigma \Gamma)^\omega$. Since $M \rtimes_\sigma \Gamma$ is a full factor, then the action of $\Gamma$ on $P$ must be strongly ergodic. Moreover, the restriction to $P$ of the canonical faithful normal conditional expectation $\rE_M : (M \rtimes_\sigma \Gamma)^\omega \rightarrow M$ defines a canonical $\Gamma$-invariant faithful normal state on $P$. Since $\Gamma$ is amenable, Theorem \ref{not strongly ergodic} implies that $P$ is finite dimensional. Now, suppose that the kernel of $\overline{\sigma}$ is not finite or that its image is not discrete.  Then in both cases, we get that for every $\mathcal{V} \in I$, the set $\{g \in \Gamma \mid \overline{\sigma}(g) \in \pi_M(\mathcal{V})\}$ is infinite. Hence, we can find distinct elements $(g_{\mathcal{V},n})_{n \in \N}$ in $\Gamma$ and some unitaries $(u_{\mathcal{V},n})_{n \in \N}$ in $\mathcal{U}(M)$ such that $\sigma(g_{\mathcal{V},n}) \circ \Ad(u_{\mathcal{V},n}) \in \mathcal{V}$. Now let $w_n=(u_{g_{\mathcal{V},n}} u_{\mathcal{V},n})^\omega \in (M \rtimes_\sigma \Gamma)^\omega$ for all $n \in \N$. Then we have $w_n \in P$ for all $n \in \N$, and the family $(w_n)_{n \in \N}$ is orthonormal. This contradicts the fact that $P$ is finite dimensional. We conclude that $\overline{\sigma}$ has finite kernel and discrete image.
\end{proof}

\begin{rem}
Theorem \ref{not centrally ergodic} remains true without the finiteness assumption on $M$. Indeed, let $M$ be an arbitrary factor and $\sigma : \Gamma \curvearrowright M$ be a centrally ergodic action of an amenable group $\Gamma$. Then for any cofinal ultrafilter $\omega$ on any directed set $I$, we have that the action $\sigma_\omega : \Gamma \curvearrowright M_\omega$ is strongly ergodic. By Theorem \ref{not strongly ergodic}, this forces $M_\omega$ to be finite dimensional which actually implies that $M_\omega=\C$. Hence $M$ is full as we wanted. However, we do not know whether Theorem \ref{not strongly ergodic} remains true in the non-state preserving case.
\end{rem}

\section{Proof of Theorem \ref{main fullness}}

We will need the following spectral gap criterion.

\begin{thm}[{\cite[Theorem 4.4]{Ma16}}] \label{gap full}
Let $M$ be a full factor. Then there exists a normal state $\varphi$, a finite set $F \subset \Sigma_\varphi$ and a constant $\kappa > 0$ such that for all $x \in M$, we have
$$ \|x-\varphi(x)\|_\varphi \leq \kappa \sum_{\xi \in F} \| x \xi- \xi x \|.$$
\end{thm}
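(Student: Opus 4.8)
The plan is to prove the contrapositive: after fixing a faithful normal state $\varphi$ on $M$ (which exists since a full factor is $\sigma$-finite), I would show that if the asserted inequality fails for this $\varphi$ for \emph{every} finite $F \subset \Sigma_\varphi$ and \emph{every} $\kappa > 0$, then $M$ cannot be full. Concretely, order the pairs $(F,n)$ with $F \subset \Sigma_\varphi$ finite and $n \in \N$ by inclusion in the first coordinate and magnitude in the second, obtaining a directed set $I$. For each $i = (F,n)$, the failure of the inequality with $\kappa = n$ produces some $x_i \in M$; subtracting $\varphi(x_i)1$ and rescaling I may arrange $\varphi(x_i) = 0$ and $\|x_i\|_\varphi = 1$, whence $\sum_{\xi \in F} \|x_i \xi - \xi x_i\| < 1/n$. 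The resulting net $(x_i)_{i \in I}$ then satisfies $\varphi(x_i) = 0$, $\|x_i\|_\varphi = 1$, and $\|x_i \xi - \xi x_i\| \to 0$ for every $\xi \in \Sigma_\varphi$.

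The principal difficulty is that such a net need not be uniformly bounded in $\|\cdot\|_\infty$, whereas fullness only constrains uniformly bounded centralizing nets; this is the main obstacle. To prepare for the truncation, I would first reduce to self-adjoint elements. Writing $x_i = a_i + i b_i$ with $a_i, b_i$ self-adjoint, the commutators of both $a_i$ and $b_i$ with every $\xi \in \Sigma_\varphi$ still tend to $0$: this uses that $\eta \mapsto \eta^*$ preserves $\Sigma_\varphi$ (its defining condition is symmetric in $\eta$ and $\eta^*$) together with the identity $(x_i \xi - \xi x_i)^* = \xi^* x_i^* - x_i^* \xi^*$, so vanishing of commutators of $x_i$ forces vanishing of those of $x_i^*$. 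Since $\varphi(x_i) = 0$ and $\|x_i\|_\varphi = 1$, the net $(x_i)$ does not converge to the scalars, so at least one of $(a_i)$, $(b_i)$ is non-trivial along a cofinal subnet. I may thus assume $x_i = x_i^*$, $\varphi(x_i) = 0$, and $\inf_i \|x_i - \varphi(x_i)\|_\varphi > 0$.

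Next I would truncate by Lipschitz functional calculus to force uniform boundedness while keeping the commutators small. The key estimate is that for a $1$-Lipschitz function $f$ and a unitary $u$ one has $f(x_i) - u f(x_i) u^* = f(x_i) - f(u x_i u^*)$, so commutators of $f(x_i)$ with unitaries, and hence — after the passage between the $\|\cdot\|_\varphi^\sharp$-norm and the action on $\Sigma_\varphi$ recorded after \cite[Lemma 3.2]{Ma16} — with the vectors $\xi \in \Sigma_\varphi$, are controlled by those of $x_i$, up to the modular correction terms inherent to the non-tracial setting. Choosing $f$ to be a bounded Lipschitz modification of the identity (or, when the spectral mass of $x_i$ concentrates too sharply, a Lipschitz approximation of a spectral cut that still splits the mass) yields a uniformly bounded net $(z_i)$ with $\|z_i \xi - \xi z_i\| \to 0$ for all $\xi \in \Sigma_\varphi$ and with $\inf_i \|z_i - \varphi(z_i)\|_\varphi > 0$. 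Bounding the escape of spectral mass via Chebyshev's inequality applied to $\|x_i\|_\varphi = 1$, and handling the modular corrections in the type $\III$ case, is the delicate point of this step.

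Finally I would convert the surviving $\rL^2$-data into the centralizing condition. For a uniformly bounded net, $\|z_i \xi - \xi z_i\| \to 0$ on the dense subspace $\Sigma_\varphi \subset \rL^2(M)$ extends to all of $\rL^2(M)$. Realizing an arbitrary $\psi \in M_*$ as a vector functional $a \mapsto \langle a \eta, \zeta \rangle$ with $\eta, \zeta \in \rL^2(M)$ and distributing the commutator over the two vectors then gives $\|z_i \psi - \psi z_i\| \to 0$, so $(z_i)$ is centralizing. Since $\inf_i \|z_i - \varphi(z_i)\|_\varphi > 0$, the net $(z_i)$ is not trivial, contradicting the fullness of $M$. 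This contradiction establishes the inequality for the chosen $\varphi$, proving the theorem; note that the argument in fact yields such a spectral gap estimate for \emph{any} faithful normal state $\varphi$.
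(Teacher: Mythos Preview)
This theorem is quoted from \cite{Ma16} and is not proved in the present paper, so there is no proof here to compare against. I can, however, comment on the soundness of your sketch.

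Your reduction to an asymptotically $\rL^2$-central net $(x_i)$ with $\varphi(x_i)=0$ and $\|x_i\|_\varphi=1$ is fine, and the observation that for self-adjoint $x$ the left and right multiplications $L_x$, $R_x$ commute on $\rL^2(M)$, so that the joint spectral calculus gives
\[
\|f(x)\xi-\xi f(x)\|\le \|f\|_{\mathrm{Lip}}\,\|x\xi-\xi x\|\qquad(\xi\in\rL^2(M))
\]
for any Lipschitz $f$, is correct and handles the ``modular corrections'' you worry about. The real gap is the non-triviality of the truncated net. A single bounded Lipschitz $f$ need not work: take $x_i=c_i(p_i-q_i)$ with $p_i,q_i$ orthogonal projections, $\varphi(p_i)=\varphi(q_i)=\tfrac{1}{2c_i^2}$, $c_i\to\infty$; then $\varphi(x_i)=0$, $\|x_i\|_\varphi=1$, but for any fixed bounded Lipschitz $f$ one has $\|f(x_i)-\varphi(f(x_i))\|_\varphi\to 0$. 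Your fallback, ``a Lipschitz approximation of a spectral cut that still splits the mass'', forces the Lipschitz constant of $f_i$ to depend on $i$ and to blow up along such a sequence, and then the commutator bound above no longer gives $\|f_i(x_i)\xi-\xi f_i(x_i)\|\to 0$: you only know $\|x_i\xi-\xi x_i\|\to 0$, with no quantitative rate to trade against the growing $\|f_i\|_{\mathrm{Lip}}$.

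This is exactly the technical obstacle that \cite{Ma16} is devoted to overcoming; it is not a routine step, and your sketch does not resolve it. Relatedly, your final remark that the argument yields the inequality for \emph{every} faithful normal state goes beyond the statement (which only asserts existence of some $\varphi$) and is not justified by what you have written.
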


We will also need the following lemma.

\begin{lem} \label{neighborhood}
Let $M$ be a factor and $\varphi$ a normal state on $M$ (not necessarily faithful). Then for any neighborhood of the identity $\mathcal{V} \subset \Out(M)$, we can find a finite subset $F \subset \Sigma_\varphi$ and a constant $c > 0$ such that for all $\alpha \in \Aut(M) \setminus \pi_M^{-1}(\mathcal{V})$ and all $u \in \mathcal{U}(M)$, we have
$$ \sum_{ \xi \in F} \|u\alpha(\xi)-\xi u \|^2 \geq c.$$
\end{lem}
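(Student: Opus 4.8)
The plan is to first absorb the unitary $u$ and reduce the statement to one about a single automorphism. Since right multiplication by a unitary is an isometry of $\rL^2(M)$ and $\Ad(u)$ acts on $\rL^2(M)$ by $\eta \mapsto u\eta u^*$, one has $\|u\alpha(\xi)-\xi u\| = \|(\Ad(u)\circ\alpha)(\xi)-\xi\|$ for every $\xi \in \rL^2(M)$. Because $\pi_M(\Ad(u)\circ\alpha)=\pi_M(\alpha)$, as $(\alpha,u)$ ranges over $(\Aut(M)\setminus\pi_M^{-1}(\mathcal V))\times\mathcal{U}(M)$ the automorphism $\beta:=\Ad(u)\circ\alpha$ ranges exactly over $\Aut(M)\setminus\pi_M^{-1}(\mathcal V)$. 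Thus the Lemma is equivalent to producing a finite $F\subset\Sigma_\varphi$ and $c>0$ such that $\sum_{\xi\in F}\|\beta(\xi)-\xi\|^2\ge c$ for every $\beta\in\Aut(M)$ with $\pi_M(\beta)\notin\mathcal V$.

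I would prove this contrapositively, by a net argument. If no such $F$ and $c$ exist, then directing the pairs $(F,c)$ by inclusion of $F$ and decrease of $c$ produces a net $(\beta_i)$ in $\Aut(M)$ with $\pi_M(\beta_i)\notin\mathcal V$ for all $i$, yet $\beta_i(\xi)\to\xi$ in $\rL^2(M)$ for every $\xi\in\Sigma_\varphi$. The goal then becomes to show that this forces $\pi_M(\beta_i)\to 1$ in $\Out(M)$, which contradicts $\pi_M(\beta_i)\notin\mathcal V$ for the fixed neighborhood $\mathcal V$.

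When $\varphi$ is faithful this is immediate: $\Sigma_\varphi$ contains the analytic vectors $a\varphi^{1/2}$, which are dense in $\rL^2(M)$, and since each $\beta_i$ acts isometrically on $\rL^2(M)$, pointwise convergence on a dense set upgrades to $\beta_i\to\id$ in the $u$-topology, whence $\pi_M(\beta_i)\to 1$. The substance of the Lemma, and the place where factoriality of $M$ is genuinely used, is the non-faithful case. Writing $e=\supp(\varphi)$, one checks that $\Sigma_\varphi\subset e\rL^2(M)e=\rL^2(eMe)$ and that it is a core for the corner $eMe$, on which $\varphi$ restricts to a faithful state. Testing against $\xi=\varphi^{1/2}$ and against $\xi=a\varphi^{1/2}$ for analytic $a\in eMe$ shows first that $\beta_i(\varphi^{1/2})\to\varphi^{1/2}$, hence $\|\beta_i(e)-e\|_\varphi\to 0$, and then that $\|\beta_i(a)-a\|_\varphi^\sharp\to 0$ for every $a\in eMe$. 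In other words, the vectors of $\Sigma_\varphi$ only control $\beta_i$ on the corner $eMe$, and the remaining task is to propagate this control to all of $M$ modulo inner automorphisms.

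This propagation is the main obstacle, and it is exactly where factoriality enters. Since $M$ is a factor and $e\neq 0$ has central support $1$, one can choose partial isometries $(v_k)_k$ with $v_k^\ast v_k\le e$ and $\sum_k v_k v_k^\ast=1$, so that $eMe$ together with the $v_k$ generates $M$. The idea is to build correcting unitaries $u_i$ that realign the moved system $(\beta_i(e),\beta_i(v_k))_k$ with $(e,v_k)_k$ --- morally a unitary close to $\sum_k v_k\,\beta_i(v_k)^\ast$, which is approximately unitary because $\|\beta_i(e)-e\|_\varphi\to 0$ --- so that $\Ad(u_i)\circ\beta_i$ fixes the $v_k$ up to a vanishing error while still acting as $\beta_i$, hence as $\id$ in the limit, on $eMe$. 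As $eMe$ and the $v_k$ generate $M$, this yields $\Ad(u_i)\circ\beta_i\to\id$ in the $u$-topology, i.e.\ $\pi_M(\beta_i)\to 1$, the desired contradiction. The delicate points to organize carefully are the passage from the strong control on the corner to honest unitaries $u_i$, the convergence of the (possibly infinite) sum defining $u_i$, and the treatment of the ``remainder'' subprojection of $e$ in the tiling when $M$ is finite; these are routine individually but must be arranged so that all the generating relations are controlled simultaneously.
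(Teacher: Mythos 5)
Your argument is essentially the paper's: the paper also argues by contradiction with a net, absorbs $u_i$ into $\alpha_i$ via $\theta_i=\Ad(u_i)\circ\alpha_i$, upgrades $\theta_i(\xi)\to\xi$ from $\Sigma_\varphi$ to all of $p\rL^2(M)p$ with $p=\supp(\varphi)$, and then concludes $\pi_M(\theta_i)\to 1$. The only difference is that the paper delegates the final corner-to-global step to \cite[Lemma 5.2]{Ma16}, whereas you sketch a direct proof of it (via a family of partial isometries with ranges summing to $1$ and a correcting unitary close to $\sum_k v_k\theta_i(v_k)^*$), which is the standard argument behind that cited lemma.
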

\begin{proof}
Suppose that there exists a net $(u_i)_{i \in I}$ in $\mathcal{U}(M)$ and a net $(\alpha_i)_{i \in I}$ in $\Aut(M) \setminus \pi_M^{-1}(\mathcal{V})$ such that $\|u_i\alpha_i(\xi)-\xi u_i \| \to 0$ for all $\xi \in \Sigma_\varphi$. Then if we put $\theta_i=\Ad(u_i)\circ \alpha_i$, we have $\theta_i(\xi) \to \xi$ for all $\xi \in \Sigma_\varphi$, hence for all $\xi \in p\rL^2(M)p$ where $p =\supp(\varphi)$. This contradicts \cite[Lemma 5.2]{Ma16}.
\end{proof}

\begin{thm}
Let $M$ be a full factor and $\sigma: \Gamma \curvearrowright M$ an action of a discrete group $\Gamma$ such that $M \rtimes_\sigma \Gamma$ is a factor. Suppose that the action $\Gamma \curvearrowright \partial_\sigma \Gamma$ has no invariant probability measure. Then there exists a normal state $\varphi$ on $M$, a finite set $F \subset \Sigma_\varphi$, a finite set $K \subset \Gamma$ and a constant $\kappa > 0$ such that for all $x \in M \rtimes_\sigma \Gamma$ we have
$$ \|x-\varphi(x) \|_\varphi \leq \kappa \left( \sum_{\xi \in F} \|x\xi-\xi x\| + \sum_{g \in K} \| xu_g-u_gx \|_\varphi^{\sharp} \right)$$
where we use the canonical conditional expectation $\rE : M \rtimes_\sigma \Gamma \rightarrow M$ to lift $\varphi$ to $M \rtimes_\sigma \Gamma$ and to view $\rL^2(M)$ as a subspace of $\rL^2(M \rtimes_\sigma \Gamma)$.
\end{thm}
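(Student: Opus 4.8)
The plan is to prove the inequality directly, working mode by mode in the Fourier decomposition over $\Gamma$ and combining three inputs: the spectral gap of the full factor $M$ (Theorem \ref{gap full}), the uniform lower bound of Lemma \ref{neighborhood}, and the absence of an invariant measure on $\partial_\sigma\Gamma$. Write $N = M \rtimes_\sigma \Gamma$ and expand $x = \sum_{g \in \Gamma} x_g u_g$ with $x_g = \rE(xu_g^\ast) \in M$. Then $\varphi(x) = \varphi(x_e)$ and $\|x - \varphi(x)\|_\varphi^2 = \|x_e - \varphi(x_e)\|_\varphi^2 + \sum_{g \neq e} \|x_g\|_{\varphi \circ \sigma_g^{-1}}^2$, so it suffices to bound the diagonal deviation $\|x_e - \varphi(x_e)\|_\varphi$ and the total off-diagonal mass. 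The Fourier mode of $x\xi - \xi x$ (for $\xi \in \Sigma_\varphi \subset \rL^2(M)$) at position $g$ is $x_g\sigma_g(\xi) - \xi x_g$, and these are orthogonal across $g$; the mode of the conjugation commutator $u_g x u_g^\ast - x$ at position $k$ is $\sigma_g(x_{g^{-1}kg}) - x_k$, which exhibits the conjugacy action $k \mapsto g^{-1}kg$ on the indices. These two computations tie the two families of commutators to, respectively, the internal structure of $M$ and the dynamics of $\Gamma \curvearrowright \beta\Gamma$.

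The first step (Part A) handles the modes where $\sigma_g$ is far from inner. Fix a neighborhood $\mathcal V$ of $1$ in $\Out(M)$, let $F_0, c$ be given by Lemma \ref{neighborhood} for $\mathcal V$, and let $F_1, \kappa_1$ be given by Theorem \ref{gap full}. At $g = e$ the commutator mode is $x_e\xi - \xi x_e$, so Theorem \ref{gap full} controls $\|x_e - \varphi(x_e)\|_\varphi$. For $g \notin \Gamma_{\mathcal V} := \{h : \overline\sigma(h) \in \mathcal V\}$ we have $\sigma_g \in \Aut(M) \setminus \pi_M^{-1}(\mathcal V)$; applying Lemma \ref{neighborhood} to $\alpha = \sigma_g$, after a polar decomposition of $x_g$ to pass from unitaries to arbitrary coefficients, yields $\sum_{\xi \in F_0} \|x_g\sigma_g(\xi) - \xi x_g\|^2 \geq c'\,\|x_g\|_{\varphi \circ \sigma_g^{-1}}^2$. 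Summing over $g$ and using orthogonality of the modes, with $F = F_0 \cup F_1$ the first sum on the right controls $\|x_e - \varphi(x_e)\|_\varphi$ together with all off-diagonal mass carried by $\{g \notin \Gamma_{\mathcal V}\}$.

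The second step (Part B) handles the near-inner modes at infinity and is where the hypothesis enters. Encode the off-diagonal masses as a finite positive measure $\nu$ on $\beta\Gamma$. The conjugacy action $\Gamma \curvearrowright \beta\Gamma$ preserves $\partial_\sigma\Gamma$, and since there is no invariant probability measure, a weak-$\ast$ compactness argument on the probability measures of $\partial_\sigma\Gamma$ produces a finite set $K_0 \subset \Gamma$ and $c_0 > 0$ with $\sum_{g \in K_0} \|g_\ast\mu - \mu\| \geq c_0\|\mu\|$ for every positive $\mu$ supported on $\partial_\sigma\Gamma$, hence by continuity on some open neighborhood $\Omega \supseteq \partial_\sigma\Gamma$ in $\beta\Gamma$. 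Shrinking $\mathcal V$ so that $\overline{\Gamma_{\mathcal V}} \setminus \Gamma \subset \Omega$ (possible by compactness, since these limit sets decrease to $\partial_\sigma\Gamma$ as $\mathcal V \downarrow \{1\}$), one checks that $\Gamma_{\mathcal V} \setminus \Omega$ is finite, so the mass of $\nu$ on $\Gamma_{\mathcal V}$ splits into a part supported in $\Omega$ and finitely many exceptional modes. For the part in $\Omega$ the displacement bound applies, and relating $\sum_{g \in K_0}\|g_\ast\nu - \nu\|$ to $\sum_{g \in K_0}\|xu_g - u_g x\|_\varphi^\sharp$ lets the conjugation commutators absorb it. Taking $K = K_0$ together with the finitely many exceptional indices completes the estimate.

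The main obstacle is the last step of Part B: the Fourier mode of $u_g x u_g^\ast - x$ at a conjugate index carries a twist by the fixed automorphism $\sigma_g$, and $\varphi$ is not $\sigma_g$-invariant, so the operatorial commutator does not literally compute the combinatorial displacement $g_\ast\nu - \nu$. The $\sharp$-norm $\|\cdot\|_\varphi^\sharp$ is introduced precisely to symmetrize in $\varphi$ and $\varphi \circ \sigma_g^{-1}$ and thereby absorb this lack of invariance, exactly as in the type $\III$ spectral gap analysis of \cite{Ma16}; making this quantitative, uniformly over the modes accumulating at $\partial_\sigma\Gamma$, is the delicate point. A remaining minor obstacle is the finitely many exceptional modes with $\sigma_g$ inner: these span a finite-dimensional direction over $M$, and factoriality of $N = M \rtimes_\sigma \Gamma$ (a standing hypothesis) is what prevents a centralizing net from concentrating there off the scalars.
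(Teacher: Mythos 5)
Your overall strategy coincides with the paper's --- isolate the mode at $e$ via Theorem \ref{gap full}, the modes with $\overline{\sigma}(g)$ far from $1$ in $\Out(M)$ via Lemma \ref{neighborhood}, the modes accumulating on $\partial_\sigma\Gamma$ via the absence of an invariant measure, and the remaining inner modes via factoriality of $M\rtimes_\sigma\Gamma$ --- but two of your steps have genuine gaps, and both are repaired by a single idea that is the actual heart of the paper's argument and is missing from your write-up. First, the mode-by-mode bound of Part A, $\sum_{\xi\in F_0}\|x_g\sigma_g(\xi)-\xi x_g\|^2\geq c'\|x_g\|^2_{\varphi\circ\sigma_g^{-1}}$, does not follow from Lemma \ref{neighborhood} ``after a polar decomposition'': the lemma is a statement about unitaries only, and writing $x_g=v_g|x_g|$ does not reduce to that case, because if $|x_g|$ nearly commutes with the vectors $\sigma_g(\xi)$ the commutator is approximately $(v_g\sigma_g(\xi)-\xi v_g)|x_g|$, and right multiplication by a non-scalar positive element can nearly annihilate the discrepancy vector while $\|x_g\|$ stays large. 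Second, you explicitly leave unresolved the ``delicate point'' of Part B; the $\sharp$-norm does not by itself symmetrize away the non-invariance of $\varphi$, since the two norms you must compare at the index $g^{-1}kg$ differ by precomposition with the fixed automorphism $\sigma_g$ uniformly over infinitely many $k$, and there is no a priori comparison between $\|\cdot\|_{\sigma_{g^{-1}k}(\varphi)}$ and $\|\cdot\|_{\sigma_{g^{-1}kg}(\varphi)}$.

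The paper closes both gaps by invoking the fullness of $M$ at \emph{every} mode, not only at $g=e$: from $\sum_g\||x^g|\sigma_g(\xi)-\sigma_g(\xi)|x^g|\|^2\leq 2\sum_g\|x^g\sigma_g(\xi)-\xi x^g\|^2$ and Theorem \ref{gap full} transported by $\sigma_g$ (note that $\sigma_g(F)\subset\Sigma_{\sigma_g(\varphi)}$ with the same constant $\kappa$), it replaces $x=\sum_g x^gu_g$ by $y=\sum_g\lambda^g v^gu_g$ with $\lambda^g=\|x^g\|_{\sigma_g(\varphi)}$ and $v^g$ unitary, at a cost controlled in $\|\cdot\|^\sharp_\varphi$. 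After this reduction your Part A estimate becomes legitimate (Lemma \ref{neighborhood} applied to the unitary $v^g$), and the Part B difficulty evaporates because $\|v\|_\psi=1$ for every unitary $v$ and every state $\psi$, so the reverse triangle inequality gives $\sum_k\bigl(\lambda^{g^{-1}kg}-\lambda^k\bigr)^2\leq\|u_gyu_g^*-y\|^2_\varphi$ regardless of whether $\varphi$ is $\sigma_g$-invariant; Cauchy--Schwarz then converts this into total-variation control of $g_*\nu-\nu$ for $\nu(\{k\})=(\lambda^k)^2$. Note finally that the paper does not argue directly but assumes the inequality fails along a net: in the limit the inner modes produce an exact nonzero central element of $M\rtimes_\sigma\Gamma$, which factoriality kills. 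Your direct version would instead require a quantitative non-centrality estimate for the finitely many exceptional modes, which you have not supplied and which does not follow from factoriality alone without a further compactness argument.
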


\begin{proof}
Let $\varphi$ be any state on $M$ as in Theorem \ref{gap full}. Suppose that the inequality we want to prove does not hold. Then we can find a net $(x_i)_{i \in I}$ in $M$ with $\|x_i\|_\varphi \to 1$ and $\varphi(x_i) \to 0$ such that $\| x_i \xi - \xi x_i \| \to 0$ for all $\xi \in \Sigma_\varphi$ and $\|u_gx_iu_g^*-x_i \|_\varphi^\sharp \to 0$ for all $g \in \Gamma$. Let $x_i=\sum_{g \in \Gamma} x_i^g u_g$ be the Fourier decomposition of $x_i$. Up to some perturbation of $x_i$, we may assume that $x_i^g$ is invertible for all $i \in I$ and $g \in \Gamma$. Write $x_i^g=v_i^g |x_i^g|$ for the polar decomposition of $x_i^g$, with $v_i^g \in \mathcal{U}(M)$. Let $\lambda_i^g=\|x_i^g\|_{\sigma_g(\varphi)}$ and $\mu_i^g=\| x_i^*\|_\varphi$. Let $y_i=\sum_{g \in \Gamma} \lambda_i^gv_i^gu_g$. Let us show that $\|x_i-y_i\|^\sharp_\varphi \to 0$. For all $\xi \in \Sigma_\varphi$, we have
$$\sum_{g \in \Gamma} \| |x_i^g| \sigma_g(\xi) - \sigma_g(\xi) |x_i^g| \|^2+ \| |(x_i^g)^*| \xi - \xi |(x_i^g)^*|  \|^2 \leq 2 \sum_{g \in \Gamma} \|x_i^g \sigma_g(\xi) - \xi x_i^g \|^2=2\|x_i \xi - \xi x_i \|^2 \to 0.$$
This implies that
$$  \sum_{g \in \Gamma} \| |x_i^g|-\lambda_i^g \|_{\sigma_g(\varphi)}^2+\| |(x_i^g)^*|-\mu_i^g \|_\varphi^2 \to 0.$$
$$ \sum_{g \in \Gamma} \|x_i^gu_g-\lambda_i^gv_i^gu_g\|_\varphi^2+\|u_g^*(x_i^g)^*-\mu_i^gu_g^*(v_i^g)^*\|_\varphi^2 \to 0.$$ 
Moreover, we have
$$ \sum_g | \lambda_i^g-\mu_i^g|^2 \leq \sum_{g \in \Gamma} \| x_i^g \sigma_g(\xi_\varphi)-\xi_\varphi x_i^g \|^2 =\|x_i \xi_\varphi-\xi_\varphi x_i \|^2 \to 0.$$
Therefore, we obtain 
$$ \left( \|x_i-y_i\|^\sharp_\varphi \right)^2 =  \sum_{g \in \Gamma} \|x_i^gu_g-\lambda_i^gv_i^gu_g\|_\varphi^2+\|u_g^*(x_i^g)^*-\lambda_i^gu_g^*(v_i^g)^*\|_\varphi^2 \to 0.$$ 
Since $\|x_i-y_i\|^\sharp_\varphi \to 0$, we have $\lim_i \|y_i \xi -\xi y_i \| = \lim_i \| x_i \xi -\xi x_i \|=0$ for all $\xi \in \Sigma_\varphi$. And since  for all $g \in \Gamma$, the net $(u_gx_iu_g^*)_{i \in I}$ satisfies the same assumption as $(x_i)_{i \in I}$, we also obtain $\|u_gx_iu_g^*-u_gy_iu_g^*\|_\varphi^\sharp \to 0$ for all $g \in \Gamma$. In particular, we have $\|u_gy_iu_g^*-y_i\|_\varphi^\sharp \to 0$ for all $g \in \Gamma$. 

Now, let
$$ \eta_i= \sum_{g \in \Gamma} \lambda_i^gu_g \in \ell^2(\Gamma).$$
Since $\|u_gy_iu_g^*-y_i\|_\varphi \to 0$, we have $\|u_g\eta_i u_g^*-\eta_i \| \to 0$ for all $g \in \Gamma$. Define a state $\Psi$ on $\ell^\infty(\Gamma)=C(\beta \Gamma)$ by taking any weak$^*$ accumulation point of $\Psi_i=\langle \cdot \eta_i, \eta_i \rangle, \: i \in I$. Then the state $\Psi$ is conjugacy invariant. We will reach a contradiction by showing that $\Psi$ is supported on $\partial_\sigma \Gamma$.

First we show that $\Psi$ is supported on $\beta \Gamma \setminus \Gamma$. For this, we have to show that $\lim_i \lambda_i^g=0$ for all $g \in \Gamma$. Let $\omega$ be a cofinal ultrafilter on $I$. Let $\lambda^g=\lim_{i \to \omega} \lambda_i^g$. Observe that $g \mapsto \lambda^g$ is constant on each conjugacy class of $\Gamma$. Since $(\lambda_i^g)^2\| \Ad(v_i^g) \sigma_g(\xi)-\xi\|^2 \to 0$ for all $\xi \in \Sigma_\varphi$, then for every $g$ such that $\lambda^g \neq 0$, we must have $\lim_{i \to \omega} \Ad(v_i^g)\sigma_g(\xi)=\xi$ for all $\xi \in \Sigma_\varphi$. Since $M$ is full this implies that $\sigma_g$ is inner and that there exists a unique unitary $v^g \in \mathcal{U}(M)$ such that $\sigma_g=\Ad(v^g)$ and $\lim_{i \to \omega} v_i^gv^gp=p$ in the $*$-strong topology, where $p=\supp(\varphi)$. By the uniqueness of $v^g$ we must have $\sigma_h(v^g)=v^{h^{-1}gh}$ for all $h \in \Gamma$. Therefore, we have
$$y=\sum_{ \substack{g \in \Gamma \\ \lambda^g \neq 0}} \lambda^g(v^g)^*u_g \in \mathcal{Z}(M \rtimes_\sigma \Gamma).$$
Since, by assumption, $M \rtimes_\sigma \Gamma$ is a factor, we conclude that $y=\varphi(y)=0$. This shows that $\lambda^g=0$ for all $g \in \Gamma$ and this holds for any cofinal ultrafilter $\omega$ on $I$. Hence, we obtain $\lim_i \lambda_i^g=0$ for all $g \in \Gamma$.

Now, we show that $\Psi$ is supported on $\{ x \in \beta \Gamma \mid \lim_{g \to x } \overline{\sigma}(g)=1 \}$. Take $\mathcal{V}$ any neighborhood of the identity in $\Out(M)$. By Lemma \ref{neighborhood}, there exists a constant $c > 0$, a finite set $F \subset \Sigma_\varphi$ such that for all $\alpha \in \Aut(M) \setminus \pi_M^{-1}(\mathcal{V})$ and all $u \in \mathcal{U}(M)$, we have
$$ \sum_{ \xi \in F} \|u\alpha(\xi)-\xi u \|^2 \geq c.$$
This implies that
$$ \sum_{ \substack{g \in \Gamma \\ \overline{\sigma}(g) \notin \mathcal{V}}} (\lambda_i^g)^2 \leq \frac{1}{c} \sum_{g \in \Gamma} \sum_{\xi \in F} (\lambda_i^g)^2 \| v_i^g \sigma_g(\xi)-\xi v_i^g\|^2 =\sum_{\xi \in F} \| y_i \xi- \xi y_i\|^2 \to 0.$$
We conclude that $\Psi$ is supported on $\{ x \in \beta \Gamma \mid \lim_{g \to x } \overline{\sigma}(g)=1 \}$ and therefore on $\partial_\sigma \Gamma = \{ x \in \beta \Gamma \setminus \Gamma \mid \lim_{g \to x } \overline{\sigma}(g)=1 \}$.

\end{proof}

\bibliographystyle{plain}

\end{document}